\newtheorem{theorem}{Theorem}[section]
\newtheorem{lemma}[theorem]{Lemma}
\newtheorem{remark}{Remark}
\def\be{\begin{equation}}
\def\ee{\end{equation}}
\def\bea{\begin{eqnarray}}
\def\eea{\end{eqnarray}}
\def\beas{\begin{eqnarray*}}
\def\eeas{\end{eqnarray*}}
\def\D{{D}}
\def\Z{{Z}}
\def\Y{{\bf Y}}
\def\V{{\bf V}}
\def\U{{\bf U}}
\def\Z{{\bf Z}}
\def\W{{\bf W}}
\def\P{{\bf P}}
\def\I{{\bf I}}
\def\D{{\bf D}}
\def\bx{{\boldsymbol \xi}}
\def\bet{{\boldsymbol\eta_0}}
\def\be{\begin{equation}}
\def\ee{\end{equation}}
\begin{document}

%\begin{frontmatter}
\title{Automatic computation of quantum-mechanical bound states and wavefunctions} 
\author{V. Ledoux\footnote{Postdoctoral
Fellow of the Research Foundation-Flanders (FWO)} and M. Van Daele}
%\address{Vakgroep Toegepaste Wiskunde en Informatica, Ghent University, Krijgslaan 281-S9, B-9000 Gent, Belgium}

\maketitle

\begin{abstract}
%We describe new algorithms and theory for the automatic solution of the multichannel Schr\"odinger equation. 
We discuss the automatic solution of the multichannel Schr\"odinger equation. The proposed approach is based on the use of a CP method for which the step size is not restricted by the oscillations in the solution. Moreover, this CP method turns out to form a natural scheme for the integration of the Riccati differential equation which arises when introducing the (inverse) logarithmic derivative.  A new Pr\"ufer type mechanism which derives all the required information from the propagation of the inverse of the log-derivative, is introduced. It improves and refines the eigenvalue shooting process and implies that the user may specify the required eigenvalue by its index. 
\end{abstract}

%\begin{keyword}
%Schr\"odinger, eigenvalue, coefficient approximation
%\end{keyword}

%\end{frontmatter}

\section{Introduction}

%in de context van schrodinger is E de totale energie, dus beter E ipv lambda

%Identity 1 :
%of A is a square matrix, than its eigenvalues are equal to the eigenvalues of its transpose

%MATRIX INVERSE VERMIJDEN IN MATLAB

The matrix-vector Schr\"odinger equation arises in various scattering and bound-state problems in physics and chemistry \cite{Alexander,Hutson,Ledoux3,Thompson}. %It may arise for instance, as a result of applying a semidiscretization technique to a Schr\"odinger partial differential equation. %Due to its importance for physical applications, many investigations have already been devoted to the solution of coupled channel equations (see ...). 
It may arise for instance, as a result of applying the so-called coupled channel approach which separates the radial coordinate from the rest of the variables in a multidimensional Schr\"odinger equation (see \cite{Hutson,Thompson}). 
The resulting system of $n$ coupled Schr\"odinger equations may be written in matrix form as
\begin{equation}\label{eq1}
\frac{d^2 {\Y}}{dx^2}=\left[{\V}(x)-E{\I}\right]{\Y}(x),
\end{equation}
where ${\Y}$ is a vector function of order $n$, $\I$ is the $n\times n$ unit matrix, and ${\V}$ is the symmetric $n\times n$ potential matrix with elements $V_{ij}(x)$. 

%This multichannel version of the time-independent Schr\"odinger problem can be written as
%\begin{equation}\frac{d^2}{dx^2}{Y}(x) +({V}(x)-E{I}){Y}(x)=0, \quad x\in [a,b]\label{eq1}\end{equation}
%where the matrix of coupling potentials ${V}(x)$ is a real symmetric $n\times n$ matrix and ${I}$ is the $n\times n$ unit matrix. 
%We are considering here the eigenvalue problem where boundary conditions are associated to the equation.
We suppose that equation \eqref{eq1} is supplemented by appropriate (bound-state) boundary conditions in the endpoints of the integration interval $[a,b]$.
In the regular case, we write these boundary conditions in the following general form:
\begin{eqnarray}\label{bcs0}
\nonumber{\bf A_1}\Y(a)+{\bf A_2}{\Y'}(a)={\bf 0}\\
{\bf B_1}{\Y}(b)+{\bf B_2}{\Y'}(b)={\bf 0}
\end{eqnarray}
where ${\bf A_1}, {\bf A_2}, {\bf B_1}, {\bf B_2}$ are real $n$ by $n$ matrices satisfying the conjointness conditions
\begin{eqnarray}\label{conj}
\nonumber{\bf A_1}^{T}{\bf A_2}-{\bf A_2}^{T}{\bf A_1}={\bf 0}\\
{\bf B_1}^{T}{\bf B_2}-{\bf B_2}^{T}{\bf B_1}={\bf 0},
\end{eqnarray}
and the rank conditions $ \mbox{rank}({\bf A_{1}}|{\bf A_{2}}) = n$, $\mbox{rank}({\bf B_{1}}|{\bf B_{2}}) = n$.
Here $({\bf A_{1}}|{\bf A_{2}})$ denotes the $n\times 2n$ matrix whose first $n$ columns are the
columns of ${\bf A_{1}}$ and whose $(n+1)^{\rm st}$ to $2n^{\rm th}$ columns are the columns of ${\bf A_{2}}$.
The objective of a bound-state eigenvalue calculation is to locate energies $E_k$ for which there exist solutions of \eqref{eq1} that satisfy the bound-state boundary conditions.
%For the regular problem, i.e.\ defined on a finite $x$-interval and $V_{ij}$ in $L^1(a,b)$, there are infinitely many eigenvalues. The eigenvalues are real, there are countable many of them
%and they can be ordered to satisfy
%\[-\infty<E_0\leq E_1\leq E_2\leq\dots\]
Only for the scalar case $n = 1$, it is guaranteed that all the eigenvalues
are simple and distinct. For $n > 1$ however, any of the
eigenvalues may have a multiplicity as great as $n$.

Solving systems of coupled channel Schr\"odinger equations, poses some challenging computational problems: (i) the efficient propagation of the oscillatory solution on the integration domain, (ii) instabilities in the presence of closed channels ($V_{jj}>E$), (iii) the determination of the eigenvalues, their multiplicities and their indices, (iv) the construction of the normalized wave functions,.... 
%In de huidige paper, presenteren we een algoritme die al die problemen aanpakt. 

The highly oscillatory behavior of the solutions corresponding to high eigenvalues forces a naive integrator
to take increasingly smaller steps. However, some classes of methods do exists which allow the computationally efficient propagation of the solution for higher $E$ values. In section \ref{section2}, we briefly revisit the CP methods, a class of methods which has been described in detail by Ixaru in \cite{Ixarubook} and still proofs to be very succesful for the numerical solution of the Schr\"odinger equation. For these methods, the step size is not restricted by the oscillations in the solution and the cost can be bounded independently of the eigenparameter $E$. %CA methods have been successfully used to solve Sturm-Liouville problems and one-dimensional single-channel Schr\"odinger equations \cite{sleuth,Ixaru2,Matslise,Marletta2,Pruess,Pryce}.
%Also in the multichannel case, CA can be used to compute large eigenvalues in reasonable time. 
Moreover a CP method is well suited to be used in a shooting approach, where the system is repeatedly integrated for different trial values of the eigenvalue. 

When using  an oscillation-proof CP-based method to solve multichannel Schr\"odinger equations in the presence of closed channels, one needs to address simultaneously problem (ii), which is a well-known difficulty in the theory of close-coupled equations \cite{Ershov,mano}, and is symptomatic of solution techniques which propagate the wavefunction explicitly. In section \ref{section3} we describe an invariant imbedding method, in which the propagated quantity is not the wavefunction matrix but its inverse logarithmic derivative. The idea of using invariant imbedding in the context of Schr\"odinger equations has found broad application, especially for large coupled-channel calculations, and dates back at least to \cite{Johnson2,Johnson,Light,Mrugala}. In the present paper, we will show that a CP method forms a natural scheme for the integration of the Riccati differential equations which arise when introducing the inverse logarithmic derivative. 

%This problem is due to the fact that the closed channel ($V_{jj}>E$) components  of the wavefunction increase exponentially. The most rapidly growing component of the wavefunction in the most strongly closed channel therefore soon dominates every column of the wavefunction matrix, and, with finite precision arithmetic, destroys the linear independence of the $n$ regular solutions that is required to match onto the boundary conditions. Different methods have been suggested to overcome the linear dependence problem. One approach is to apply linear independence-restoring stabilizing transformations during  the propagation \cite{Friedman,Gordon,Ixaru,Tolsma}. These transformations are expensive in terms of computer time and must be applied across the entire integration interval when strongly closed channels are included. A less clumsy approach is to use an invariant imbedding method, in which the propagated quantity is not the wavefunction matrix but rather its logarithmic derivative or its inverse. The idea of using invariant imbedding in the context of Schr\"odinger equations has found broad application, especially for large coupled-channel calculations, and dates back at least to \cite{Johnson2,Johnson,Light,Mrugala}. In the present paper, we will show that CA methods form a natural scheme for the integration of the Riccati differential equations which arise when introducing the logarithmic derivative or its inverse. 

In section \ref{section4}, we consider the solution of the eigenvalue problem, which has attracted much less attention in literature than the scattering problem.  In order to improve and refine the eigenvalue shooting process, we supplement our procedure with a new Pr\"ufer transform type mechanism which derives all its required information from the propagation of the inverse logarithmic derivative. The Pr\"ufer algorithm allows to determine eigenvalue indices and multiplicities. This also implies that the user may specify the required eigenvalue by its index and does not need to provide any initial guess.

%(ik zou cont orth kunnen vermelden enzo maar dat is moeilijk verenigbaar met coefficient approximatie die het oscillerend gedrag behandelt) compound matrices?

%The contents of the rest of the paper are as follows. Section 2 describes the CA methods which we will employ for the automatic solution of the coupled-channel Schr\"odinger system. In section 3 we show how  the CA methods can be used to propagate the log-derivative or its inverse while integrating through the singularities in the Riccati flow. In section 4, we discuss the shooting process and how it is refined by the Pr\"ufer approach. 
Section 5 describes the implementation of the different algorithms in an automatic software code in greater detail. In section 6 we look at
some results generated by this software package. %Section 7 contains some concluding comments.
%The requirements for the accuracy and efficiency of the numerical algorithms are constantly increasing together with the complexity of modern physical problems to be solved.

%a set of coupled Schr\"odinger equations

%invariant imbedding wordt al lang gebruikt om om te vormen in Riccati equations

%In many cases the solution of a multidimensional quantum mechanical problem is reduced to a solution of the time-independent Schr\"odinger equation

%dus voor multichannel Schrod: we willen iets dat rekening houdt met oscillaties, dus we willen coefficient approximatie doortrekken naar het multichannel probleem. hoe kunnen we dit doen: iedere zoveel stappen een stabilization transformation (maar dat is niet zo mooi), Cont. orthog. levert een vgl op waar coeff approximatie kostelijk is voor groot systeem dus met log-derivative gaan werken, iets wat traditioneel al veel wordt gebruikt voor dat multichannel probleem

%the log-derivative matrix is also symmetric for all $x$

\section{Constant Perturbation (CP) methods}\label{section2}
Methods which are succesful in the presence of high oscillation are often based on some form of coefficient approximation (see e.g.\ \cite{Iserles}). The coefficient functions in the differential equation are approximated and the corresponding solution is constructed analytically. In the context of the Schr\"odinger equation, coefficient approximation translates into the (piecewise) approximation of the potential function. Many references on approximate potential algorithms e.g.\ \cite{Canosa,Chou,Light,mano2,Marletta,Pruess}, confine themselves to piecewise constant approximation for which the approximating problem can be integrated explicitly in terms of trigonometric or hyperbolic functions. 

%\subsection{A second order coefficient approximation scheme}
Let us review how this piecewise constant approximation method can be used to propagate the solution of the coupled channel  initial value problems which arise in an eigenvalue shooting process. The bound-state wavefunction $\Y(x)$ in eq.\ \eqref{eq1}, satisfying the boundary conditions in $a$ and $b$, is represented as a column vector with $n$ components. However, if the boundary conditions are applied at only one end of the range, there are $n$ linearly independent solution vectors that satisfy the Schr\"odinger equation, so that until {\em both} boundary conditions are applied it is actually necessary to propagate an $n\times n$ wavefunction matrix. So one can think of $\Y$ as an $n\times n$ matrix here. %The boundary conditions \eqref{bcs0} can be used to form initial values in $a$: $\Y(a)={\bf A}_2{\bf A}_1^{-1}$ or $b$. %In general, $Y$ is a set of $nsol$ column vectors with $n$ components. $nsol \leq n$ represents the number of different (linear independent) solutions needed. However, in practice the situation $nsol \neq n$ is not very often encountered and one can think of $Y$ as an $n\times n$ matrix.

A partition of $[a,b]$ is introduced, with the mesh points $x_0=a,x_1,x_2,...,x_{N}=b$. In each interval $[x_{i},x_{i+1}], h_i=x_{i+1}-x_{i}$, the solution and its first derivative are advanced by a blockwise propagation algorithm:
\begin{equation}\label{eq0}\left[\begin{matrix}\Y(x_{i+1})\\
{\Y}'(x_{i+1})\end{matrix}\right]=
\left[\begin{matrix}\U(h_i)&\W(h_i)\\
\U'(h_i)&\W'(h_i)\end{matrix}\right]\left[\begin{matrix}\Y(x_{i})\\ \Y'(x_{i})\end{matrix}\right].
\end{equation} 
%The backwards propagation is given by
%\begin{equation}\label{eq0bb}\left[\begin{matrix}\Y(x_{i})\\
%{\Y}'(x_{i})\end{matrix}\right]=
%\left[\begin{matrix}{\W'}^T(h_i)&-\W^T(h_i)\\
%-{\U'}^T(h_i)&\U^T(h_i)\end{matrix}\right]\left[\begin{matrix}\Y(x_{i+1})\\ \Y'(x_{i+1})\end{matrix}\right].
%\end{equation} 
The elements of the transfer matrix, ${\U(\delta)}$ and ${\W(\delta)}$ are the $n \times n$ solutions of 
\begin{equation}\label{eq1b}{\P}''=\left({\V}(x_i+\delta)-E{\I}\right){\P},\;\;\delta \in [0,h_i]\end{equation}
corresponding to the initial conditions ${\P}(0)={\I}$, ${\P}'(0)={\bf 0}$ and ${\P}(0)={\bf 0}$, ${\P}'(0)={\I}$, respectively.
To determine ${\U}$ and ${\W}$ the potential matrix is approximated by a constant matrix,
\begin{equation}{\V}(x_i+\delta)\approx{\V}_0.\end{equation}
The symmetric matrix ${\V}_0$ is then diagonalized and let ${\D}$ be the orthogonal diagonalization matrix, i.e. ${\V}_0={\D}{\V^D_0} {\D}^T$. We can then write Eq. \eqref{eq1b} as
\begin{equation}{\D}^T{\P}''{\D}=\left({\D}^T {\V_0}{\D} -E{ \I}\right){ \D}^T{\P}{ \D},\end{equation}
or in the ${\D}$ representation
\begin{equation}{\P^D}''=\left({\V^D_0} -E{ \I}\right){\P^D}.\label{sets}\end{equation}
The diagonalization process transforms the system into one in which there is no coupling. The resulting set \eqref{sets} of $n$ one-dimensional Schr\"odinger equations can then be solved analytically. That is, \eqref{sets} is solved for ${\U^D}$ and ${\W^D}$; the initial conditions are the same as in the original representation. 
Let the functions $\xi(z)$ and $\eta_0(z)$ (as in \cite{Ixarubook}) be defined as follows:
\begin{equation}
\nonumber \xi(z) = \left\{ \begin{array}{ll}
\displaystyle \cos(|z|^{1/2}) & {\rm\ if\ } z \leq 0\,,\\[.2cm]
\displaystyle \cosh(z^{1/2}) & {\rm\ if\ } z > 0\,,
\end{array} \right.
%\label{a1}%\end{equation}
%and
%\begin{equation}
%\nonumber 
\quad
\eta_0(z) = \left\{ \begin{array}{ll}
\displaystyle \sin(|z|^{1/2})/|z|^{1/2} & {\rm\ if\ } z < 0\,,\\[.2cm]
\displaystyle 1 & {\rm\ if\ } z=0\,,\\[.2cm]
\displaystyle \sinh(z^{1/2})/z^{1/2} & {\rm\ if\ } z > 0\,.
\end{array} \right.
\end{equation}
The propagators ${\U^D}(\delta)$ and ${\W^D}(\delta)$ are then diagonal matrices:
\begin{align}
{\U^D}&=({\W^D})'={\bx}({\Z(\delta)})\label{UWdef0}\\
\delta({\U^D})'&={\Z}(\delta)  \bet({\Z(\delta)})\\
{\W^D}&=\delta\bet({\Z(\delta)})\label{UWdef}\end{align}
where
\begin{equation}{\Z}(\delta)=(\V^D_{0}-E\I)\delta^2,\end{equation}
and ${\bx}({\Z}),\,\bet({\Z}) $  are diagonal matrices of functions with diagonal elements ${\xi}({Z_k}), {\bf\eta}_0({Z_k})$
%\begin{equation}{\xi}({Z})=
%\left[\begin{matrix}\xi(Z_1)&\dots&0\\
%\vdots&\ddots&\vdots\\
%0&\dots&\xi(Z_n)\\
%\end{matrix}\right],\;\;
%%\end{equation}
%%\begin{equation}
%{\bf\eta}_m({Z})=
%\left[\begin{matrix}\eta_m(Z_1)&\dots&0\\
%\vdots&\ddots&\vdots\\
%0&\dots&\eta_m(Z_n)\\
%\end{matrix}\right]\label{etam}
%\end{equation}
with $Z_k(\delta)=(V^D_{0_{kk}}-E)\delta^2$.
 Once the values at $h_i$ of the ${\U^D,\, \W^D}$ matrices and of their derivatives have been evaluated, they are reconverted to the original representation to obtain the desired entries of the transfer matrix in \eqref{eq0}. %${\U}(h_i)={\D}{\U^D}(h_i){\D}^T$, ${\U}'(h_i)={\D}({\U^D})'(h_i){\D}^T$ and similarly for $\W$. % ${W}(h_i)={D}{W^D}(h_i){D}^T$, , ${W}'(h_i)={D}({W^D})'(h_i){D}^T$.
The diagonalising transformation is energy independent. In a bound-state problem which involves calculations at a number of `trial' and `iterated' energies this is clearly a good feature.

%\subsection{Higher order coefficient approximation schemes} 
\label{CA6}
The method based on a constant approximation of the potential, is only second order. To obtain higher-order coefficient approximation methods, piecewise
polynomial approximations of higher degree should be used \cite{Ixarubook}. In fact, it can be shown that piecewise polynomial interpolants  ${\bf V}^{(m)}$ of degree $m$ through the Legendre nodes, give rise to a method of order $2m+2$. %through the Legendre points are used of degree $m$ fourHere we will present a sixth order scheme, which is based on a polynomial approximation of degree 2:
%\[{\V}(x_i+\delta)\approx \sum_{m=0}^2 {\V}_mh^mP^*_m(\delta/h_i)\]
%with $P_m^*(\delta/h_i)$ the shifted Legendre polynomials and the matrix weigths ${\V_m}$ given by the method of least squares:
%\begin{align}
%\nonumber{\V}_0&=\frac{1}{h_i}\int_0^{h_i} {\V}(x_i+\delta)d\delta,\\
%{{\V}}_m&=\frac{(2m+1)}{h_i^{m+1}}\int_0^{h_i} {\V}(x_i+\delta)P^*_m(\delta/h_i) d\delta,\;\;m=1,2 .
%\label{Gauss}
%\end{align}
%These weigths are computed by sixth order Gauss-Legendre quadrature.
It is, however, difficult to obtain analytic expressions for the solution of the approximating problem when $m>0$. Fortunaly, a higher order polynomial approximation still retaining the nice property of having explicit integration in terms of trigonometric/hyperbolic functions, can be realized using a perturbation approach by Ixaru \cite{Ixarubook, Ixaru, Ixaru2} leading to the so-called CP methods.
 The idea is to derive correction terms from the perturbation ${\bf V}^{(m)} (x_i+\delta)-\V_0$ and add these to the second order method \eqref{UWdef0}-\eqref{UWdef} to obtain more accurate approximations for the propagator matrices ${\bf U}$, ${\bf W}$ and their first derivatives. We refer to \cite{Ixaru,Ledoux2} for more details on the perturbative procedure and for the formulae of some higher order schemes.

\section{Invariant imbedding}\label{section3}
%\begin{figure}
%	\centering
%		\includegraphics[width=0.85\textwidth]{./linDep6.eps}\\
%		\includegraphics[width=0.85\textwidth]{./linDep4.eps}%\\
%		%\includegraphics[width=1\textwidth]{./linDep7.eps}\\
%		%\includegraphics[width=1\textwidth]{./linDep5.eps}
%	\caption{Wavefunction propagation in the presence of closed channels.}	\label{fig:linDep}
%\end{figure}
The CP schemes propagate the wavefunction matrix and its derivative explicitly. However, wavefunction propagation methods are subject to a classical numerical instability. In many bound-state calculations originating from quantum physics, all channels are locally closed ($V>E$) near the origin or at large distance, and it is often necessary to include channels that are closed at all $x$ to get converged results. The wavefunction component $Y_j$ for a locally open channel ($V_{jj}<E$) is an oscillatory function of $x$, but the $Y_j$ for a locally closed channel ($V_{jj}(x)>E$) is made up of exponentially increasing and decreasing components. %This is illustrated in Figure \ref{fig:linDep}. The system considered has a $2\times 2$ potential matrix which is shown in the top figure. 
%\begin{align}
%\nonumber&V_{1,1}(x)=-4e^{-x^2/4},\, V_{2,2}(x)=\frac{3.8}{1+e^{-(x-35)}}-3.8\\
%\nonumber&V_{1,2}(x)=V_{2,1}(x)=0.5e^{-x^2}.
%\end{align}
%The domain is the real half axis $x > 0$ and we took $E=-3.5$. For this $E$-value the first channel is closed and the second is open over an important part of the domain. The second order scheme \eqref{eq0} was used to propagate $\Y$ and $\Y'$  from $x=0$ to $x=40$ with starting values $\Y(0)={\bf 0}, \Y'(0)=\I$. As seen in the second plot of Figure \ref{fig:linDep}, there are two components which show an exponential increasing behaviour as soon as we start integrating in the $V_{1,1}(x)>E$ region and two components which are oscillating until eventually the classically forbidden region $V_{2,2}(x)>E$ is reached.
If there are both locally open and locally closed channels over any range of $x$, there is a tendency for the closed channel components to grow so quickly that because of numerical rounding errors the linear independence of the different solutions in the wavefunction matrix is lost.
%numerical instabilities may occur since the channel function $y_{jk}$ in any channel that is locally closed ($V_{jj}(x)>E$) tends to explode exponentially. The closed channels quickly dominate the wavefunction matrix, leading to a loss of linear independence between the columns of $Y$. 

In e.g.\ \cite{Friedman,Gordon,Ixaru,Tolsma}, stabilizing transformations were applied during propagation to prevent overflow and to maintain the linear independence of the $n$ solution vectors. % re-establish the linear independence of the columns of the wavefunction matrix. 
The drawback with such transformations is that they are expensive and should be applied across the entire integration range when strongly closed channels are included.
A more satisfactory solution is to form a CP propagator that is stable in the presence of closed channels. That is, we use the
knowledge of the components of the transfer matrix in \eqref{eq0} to construct a propagation algorithm  %the log-derivative of the solution, defined as ${\bf \Phi}={\Y}'{\Y}^{-1}$ where $\Y$ is the $n \times n$ wavefunction matrix (see \cite{Alexander,Ixaru}):
%\begin{equation}
%{\bf \Phi}(x_{i+1})=[{\U}'(h_i)+{\W}'(h_i){\bf \Phi}(x_i)][{\U}(h_i)+{\W}(h_i){\bf \Phi}(x_i)]^{-1}, \label{Srec}
%\end{equation}
%or 
for the inverse of the log-derivative ${\bf \Psi}={\Y}({\Y}')^{-1}$, sometimes called the $R$-matrix, see e.g.\ \cite{Hutson}:
\begin{equation}
{\bf \Psi}(x_{i+1})=[{\W}(h_i)+{\U}(h_i){\bf \Psi}(x_i)][{\W}'(h_i)+{\U}'(h_i){\bf \Psi}(x_i)]^{-1}.\label{Rrec}
\end{equation}
%The diagonal elements of  ${\bf \Psi}$ become constant when $\Y$ is exponentially increasing or decreasing, so that loss of linear independence does not occur.
%It is clear that the logarithmic derivative (or its inverse) of an exponentially growing component in the wavefunction which behaves as $\exp(+\omega x)$ is $+\omega$ (or $+1/\omega$), which is not exponentially growing and does not therefore cause any problems.
Apart from the fact that no stabilising transformation is needed, ${\bf \Psi}$  contains the minimum amount of information needed for the determination of the bound-state eigenvalues. Since ${\bf \Psi}$ is symmetric for all $x$, the $2n^2$ storage locations required in the $(\Y,\Y')$ representation can be reduced to $n(n+1)/2$ for calculations in the ${\bf \Psi}$  representation. 
%\begin{figure}
%	\centering
%	\includegraphics[width=1\textwidth]{./linDep.eps}\\
%	\includegraphics[width=1\textwidth]{./linDep2.eps}
%	\caption{Illustration of the better numerical stability when propagating $\Psi$ instead of $Y$ and $Y'$ in $\Psi^{(Y)}=Y(Y')^{-1}$.}	\label{fig:linDep}
%\end{figure}

%
%Figure \ref{fig:linDep} also shows the entries of ${\bf \Psi}$ which was propagated from $x=0$ to $x=40$ using the second order CA scheme \eqref{Rrec}. The exponential build up of the wavefunction in the classically forbidden regions is cancelled, and the method is inherently stable. The bottom figure in \ref{fig:linDep} gives an illustration of the better numerical stability when propagating ${\bf \Psi}$ instead of $\Y,\Y'$. The propagated $\Y$ and $\Y'$ values obtained in each meshpoint were used to evaluate ${\bf \Psi}^{(Y)}$ defined by ${\bf \Psi}^{(Y)}=\Y(\Y')^{-1}$. We can observe that ${\bf \Psi}^{(Y)}$ gradually looses it symmetry and displays chaotic behaviour for larger $x$ values caused by the difficulties in evaluating the matrix inverse of $\Y'$ which gets nearly singular.% For small $x$, the entries of $\Psi^{(Y)}$ remain very close to the ones of $\Psi$, but for $x$-values deeper in the classically forbidden region ($V_{1,1}>E$) they only show some chaotic behavior and results get very unreliable, while the propagion of $\Psi$ is numerically stable. This is also reflected in the bottom plot of Figure \ref{fig:linDep}.

There is however one complication in propagating ${\bf \Psi}$. The matrix ${\bf \Psi}={\Y}({\Y}')^{-1}$ has a singularity whenever the determinant of ${\Y}'$ vanishes.  %The $2\times 2$ potential matrix, for $x\geq 0$, is given by
%\begin{align}
%&V_{1,1}(x)=V_{2,2}(x)=V_{\rm PT}(x;45,1)+V_{\rm PT}(x;39/2,1/2)\label{Ix1}\\
%&V_{1,2}(x)=V_{2,1}(x)=V_{\rm PT}(x;45,1)-V_{\rm PT}(x;39/2,1/2)\label{Ix2}
%\end{align}
%where $V_{\rm PT}$ is the P\"oschl-Teller potential
%\[V_{\rm PT}(x;\nu,\alpha)=-\nu/\cosh^2(\alpha x).\]
%${\bf \Psi}$ was propagated using the second order coefficient approximation method for $E$ equal to the second eigenvalue, i.e.\ $E=-36$.
\begin{figure}
	\centering
	\includegraphics[width=0.85\textwidth]{./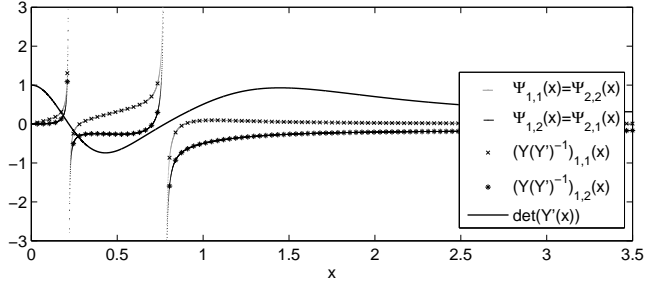}
	\caption{Illustration of how $\Psi(x)$ is integrated through the singularities by a CP method. }	\label{fig:Psi}
\end{figure}
By introducing ${\bf \Psi}$, the system of coupled equations is recast into a matrix Riccati differential equation (see \cite{Atkinson,Chou})
\begin{equation}
{\bf \Psi}'=\I-{\bf \Psi}^T[{\V}-E \I]{\bf \Psi}.\label{Psieq2}
\end{equation} 
One of the properties of Riccati type differential equations is indeed the existence of solutions with singularities. General numerical methods are incapable of integrating through these singularities and specific computational approaches should be used, see \cite{Chou,Schiff} and references therein.
It is known that one can integrate through singularities of the (inverse) log-derivative Riccati equation by changing coordinates and e.g.\ switch between the log-derivative and its inverse \cite{Nelson,Scottbook}. A new observation, however, is that when CP propagation algorithms of the form \eqref{Rrec} are used, changing coordinates is not required. Figure \ref{fig:Psi} shows the entries of ${\bf \Psi}$ for the testproblem described in \cite{IxaruE} which has known analytic expressions for the eigenvalues and eigenfunctions. 
Although, the ${\bf \Psi}$ curves have singularities when $\det(\Y'(x))$ is zero, they coincide with the entries of $\Y(\Y')^{-1}$ which were obtained by subdividing the known value of the wavefunction matrix at some $x$-values by its first derivative. The explanation of this good behaviour of the CP methods can be found in \cite{Schiff}. 
In \cite{Schiff}, Schiff and Shnider proposed so-called M\"obius schemes (since they use generalized M\"obius transformations) for the numerical integration of a Riccati differential equation. M\"obius schemes are natural schemes for the integration of Riccati differential equations from a geometric viewpoint and can accurately pass through singularities in the solution. The CP scheme \eqref{Rrec} can actually be seen as a M\"obius scheme. Indeed
the formula \eqref{Rrec} expresses the fact that ${\bf \Psi}(x_{i+1})$ is obtained from ${\bf \Psi}(x_i)$ by a M\"obius transformation which is independent of the value of  ${\bf \Psi}(x_i)$. %So if in earlier efforts in the direction of applying a CA method in combination with invariant imbedding (e.g. \cite{IxaruTalk}) one had used the CA method to integrate through a singularity, instead of switching back and forth between log-derivative and inverse, one would have succeeded. %For a full explanation of the ideas behind M\"obius schemes and why they can pass through singularities we refer to \cite{Schiff}. 

\section{Shooting}\label{section4}
Our objective is to apply the CP integration schemes \eqref{Rrec} in a shooting procedure to locate the eigenvalues of the boundary value problem. We associate with the system the left and right-hand matrices ${\bf \Psi}_L$ and ${\bf \Psi}_R$. These are $n$ by $n$ matrix functions satisfying the initial conditions
\begin{align}
{\bf \Psi}_L(a)={\bf A_2}{\bf A_1}^{-1},\quad {\bf \Psi}_R(b)={\bf B_2}{\bf B_1}^{-1}.
\label{YLYRIC}
\end{align}
%One way to locate eigenvalues is to look for $E$ such that
%\[D(E)=\det\left(\begin{matrix}\Y_L(c)&\Y_R(c)\\\Y'_L(c)&\Y'_R(c)\end{matrix}\right)=0,\]
%where $c\in[a,b]$ is the matching point. %However as mentioned in the previous section, it is better for stability reasons to propagate the log-derivative or its inverse rather than the wavefunction matrix. The matching condition can be expressed very simply in terms of ${\bf \Phi}$:
%\begin{equation}{\hat D}(E)=\det \left(\begin{matrix}{\bf \Phi}_L(c)-{\bf \Phi}_R(c)\end{matrix}\right)=0\label{mc}\end{equation}
The matching condition can be expressed very simply in terms of ${\bf \Psi}$:
\begin{equation}{D}(E)=\det \left(\begin{matrix}{\bf \Psi}_L(c)-{\bf \Psi}_R(c)\end{matrix}\right)=0.\label{mc2}\end{equation}
where $c\in[a,b]$ is the matching point. 
So the basis for our numerical procedure is to integrate ${\bf \Psi}$ from the ends to $c$ for some trial value of $E$, evaluate ${D}(E)$ and take this as the mismatch. If the mismatch is not zero, $E$ is not an eigenvalue and the procedure is repeated for an adjusted value of $E$.

There are however some complications we need to deal with when we want to use this approach in automatic software. The mismatch function is an oscillating function in which singularities may appear, which is inconvenient for rootfinding. Moreover, the mismatch function does not change sign as $E$ passes through an eigenvalue of even multiplicity. This latter issue would be avoided when using the alternative approach (see \cite{Hutson}) of using the smallest eigenvalue in absolute value of the matching matrix ${\bf \Psi}_L(c)-{\bf \Psi}_R(c)$ as mismatch, in which case the number of zero eigenvalues of the matching matrix gives us the multiplicity of the eigenvalue. An additional problem, however, is that the mismatch function does not give any
way of determining the index of the eigenvalue once it has been found. 

In the scalar case, the problems are resolved by enhancing the algorithm with a Pr\"ufer method which counts the zeros of the solution as part of the integration for each trial $E$ value (see \cite{Bailey,Pruess,Pryce}), and which allows the computation of a particular eigenvalue of given index, without the prior computation of all the preceding eigenvalues. In the multichannel case, the index of an eigenvalue is determined by the number of times the determinant of the solution vanishes or more precisely by the number of times the wavefunction matrix has a zero eigenvalue and the multiplicity of this eigenvalue. We introduce a new Pr\"ufer-like procedure for the multichannel problem counting the number of zero eigenvalues and their multiplicities.
% + de mismatch functie is niet smooth door singularities. daarom eerst interval vinden waarin eigenwaarde zich bevindt mbv 2de orde ca methode? en dan hogere orde methode voor shooting?
This procedure will allow us to construct an indexing function ${\mathcal I}(E)$, such that ${\mathcal I}(E)$ equals the number of eigenvalues that are less than $E$.
If we can calculate this function from shooting data, we are able to determine
whether a trial value of $E$ is ``near'' the eigenvalue $E_k$ we are looking for and whether it is too high or too low. %If ${\hat E}, {\bar E}$, with ${\hat E} < {\bar E}$, are two values such that ${\mathcal I}({\hat E})\leq k$ and ${\mathcal I}({\bar E})\geq k+1$, then the $k$th eigenvalue $\lambda_k$ lies in the interval ${\hat E}\leq \lambda_k<{\bar E}$. Once an interval has been found which contains just one eigenvalue ...

%we zullen de scalar Prufer method veralgmenen. we will first describe how the Pr\"ufer method can be used to construct ${\mathcal I}(E)$ in the scalar case.

Before proceeding to the multichannel case, it is instructive to consider first the Pr\"ufer approach in the single-channel Schr\"odinger case.
\subsection{The classical (scalar) Pr\"ufer method}
%We introduce coordinates in the phase plane: 
%\begin{equation}\label{uw0}
%u=y,\quad w=y'.%\quad z=\left[\begin{matrix}u\\v\end{matrix}\right].
%\end{equation}
In a shooting process, the one-dimensional equation
\begin{equation}\label{1dim}
y''(x)=\left[V(x)-E\right]y(x),\;\;x\in[a,b],
\end{equation} is integrated from left to right, with initial values $y(a)=A_2, y'(a)=-A_1$, to obtain a left solution $y_L, y'_L$; and integrated from right to left, with initial values $y(b)=B_2, y'(b)=-B_1$, to obtain a right solution $y_R, y'_R$. The main idea of the Pr\"ufer method is to introduce polar coordinates $(\rho,\theta)$ in the phase plane:
\begin{equation}\label{uw1}
y=\rho \sin\theta,\quad y'=\rho \cos\theta.
\end{equation}
The phase angle $\theta$ is defined (modulo $\pi$) by the equation
\begin{equation}
\tan\theta=\frac{y}{y'}.\label{uw}
\end{equation}
%PAS OP dit klopt niet:
%\begin{equation}
%\theta=\mbox{arctan}\left(\frac{u}{w}\right).
%\end{equation}
% omdat arctan steeds tussen -pi/2 en pi/2 ligt en ik wil het tussen 0 en pi, dus in de vorm van tan schrijven ipv met arctan!!!!
Using \eqref{1dim} and \eqref{uw1} it can be shown that $\theta=\theta(x)$ satisfies (see \cite{Pryce}):
\begin{equation}\label{thetaeq}
\theta'=\cos^2 \theta-[V(x)-E] \sin^2 \theta,\quad a<x<b.
\end{equation}
Equation \eqref{thetaeq} has a left-solution $\theta_L(x)$, with $\theta_L(a)=\theta_0(a)$; and a right-solution $\theta_R(x)$, with $\theta_R(b)=\theta_0(b)$, where %the initial conditions
 $\theta_0(a)$ and $\theta_0(b)$ are defined by
\begin{align}\label{normal}
\tan \theta_0(a)=\left(-\frac{A_2}{A_1}\right),\quad 0\leq\theta_0(a)<\pi,\\
\tan \theta_0(b)=\left(-\frac{B_2}{B_1}\right),\quad 0<\theta_0(b)\leq\pi.
\end{align}
From equation \eqref{thetaeq} we see that if $\theta({\bar x})=m\pi$ (where $m$ is an integer), then $\theta'({\bar x})=1>0$. This shows that $\theta_L(x)$ increases through multiples of $\pi$ as $x$ increases.
%can never decrease through a multiple of $\pi$. 
Similarly $\theta_R(x)$ decreases through multiples of $\pi$ as $x$ decreases. Since $y=0$ just when $\theta$ is a multiple of $\pi$, the number of zeros of $y$ on $(a,c)$ is then the number of multiples of $\pi$ (strictly) between $\theta_L(a)$ and $\theta_L(c)$. Analogously the number of zeros of $y$ on $(c,b)$ is the number of multiples of $\pi$  through which $\theta_R$ decreases going from $b$ to $c$. Since the index $k$ of an eigenvalue equals the number of zeros of the associated eigenfunction $y(x)$ on the open interval $(a,b)$, we can use these results to formulate the function ${\mathcal I}(E)$. Let
\begin{equation}\label{intn}
\theta_L(c,E)-\theta_R(c,E)=n(c,E)\pi+\omega(c,E)
\end{equation}
where $n(c,E)$ is an integer and $0\leq \omega(c,E)<\pi$.
${\mathcal I}(E)$ can then be defined as ${\mathcal I}(E)=n(c,E)+1$
or
\begin{equation}
{\mathcal I}(E)=\frac{1}{\pi}\left[\theta_L(c,E)-\theta_R(c,E)-\omega(c,E)\right]+1.
\end{equation}
The function ${\mathcal I}(E)$ is a piecewise constant with jumps at the eigenvalues.

In practical implementations, one often uses a scaled version of the Pr\"ufer method 
\begin{equation}
y=S^{-1/2}\rho \sin\theta,\quad y'=S^{1/2}\rho \cos\theta,
\end{equation}
where $S$ is a scaling function chosen to give good numerical behaviour (see \cite{Pryce}). For instance, the software package SLEDGE \cite{Pruess}, which implements the second order coefficient approximation method for the scalar (Sturm-Liouville) problem, uses the fact that
\be\arctan \frac{\tau_i y_{i+1}}{y'_{i+1}} =\arctan \frac{\tau_i y_i}{y'_i}+\tau_ih_i \mod \pi,\quad \tau_i=\sqrt{E-V_0}\label{sledge1}\ee
when $E-V_0>0$, to determine the number of zeros in the interval $(x_i,x_{i+1})$  as the number of integers 
%We moeten dan een manier hebben om die $\theta_L(c,E)$ en $\theta_R$ numeriek te bepalen, i.e. we moeten tellen hoeveel keer $\theta$ door een veelvoud van $\pi$ gaat, of maw hoeveel keer de eigenfunctie nul wordt. Dit laatste aantal komt overeen met het aantal integers 
in the interval $(\Theta/\pi,(\Theta+\tau_i h_i)/\pi)$ where $\Theta=\arctan \frac{\tau_i y_i}{y'_i}\label{sledge2}$. When $E-V_0<0$, the zero count is kept by noting that $y$ has a (single) zero in $(x_i,x_{i+1})$, if and only if $y_iy_{i+1}<0$.

%While propagating with a (constant) second-order CA method, the zero-count is kept as follows...

\subsection{Indexing in the matrix case} \label{pruferm}
We try to generalize the Pr\"ufer idea to a system of equations by decoupling the system into $n$ scalar ones to which the simple Pr\"ufer method can be applied. This means that we try to obtain a problem in
diagonal form. In order to derive a direct generalization of the scalar Pr\"ufer approach, we will work here with the matrix function ${\bf \Psi}(x)=\Y(x){\Y'(x)}^{-1}$, whose scalar equivalent appears in \eqref{uw}. 
The matrices ${\bf \Psi}_L(x)$ and ${\bf \Psi}_R(x)$ are symmetric for all $x\in[a,b]$ (see \cite{Atkinson}).
%\begin{theorem}
%The matrix functions ${\bf \Psi}_L(x)$ and ${\bf \Psi}_R(x)$ are symmetric for all $x\in[a,b]$. Furthermore ${\bf \Psi}(x)$ satisfies the Riccati-type differential equation
%\begin{equation}
%{\bf \Psi}'=\I-{\bf \Psi}^T[{\V}-E \I]{\bf \Psi}.\label{Psieq2}
%\end{equation}
%\end{theorem}
The matrix ${\bf \Psi}$ can consequently be diagonalized to a matrix having the eigenvalues of ${\bf \Psi}$ on the diagonal.
%1) als IC diagonaal dan kan ik ook sledge-achtig doen (als V ook diag)
%2) als alg IC's dan weten we dat log-derivative symmetrisch is (zeker?) en bijgevolg diagonaliseerbaar: de eigenwaarden staan dan op de diagonaal
These diagonal elements are then, as for the scalar case, used to devise a monotone increasing integer-valued indexing function.  Our approach owes a lot to the work of Atkinson \cite{Atkinson} who developed a Pr\"ufer method for the vector Sturm-Liouville system, and to Marletta \cite{Marletta,Marlettaphd} who sharpened Atkinson's method by finding an integer-valued function which jumps at each eigenvalue.
However in contrast to Atkinson and Marletta, we will not introduce the matrix ${\bf \Theta}(x)=(\Y'+i\Y)(\Y'-i\Y)^{-1}$. Instead we will show that sufficient information can be derived for our purpose here from the propagation of ${\bf \Psi}(x)$. Moreover, by introducing a SLEDGE-like node count algorithm, the new approach is more robust.
%\begin{theorem}
%The matrix functions ${\bf \Psi}_L(x)$ and ${\bf \Psi}_R(x)$ are symmetric for all $x\in[a,b]$. Furthermore ${\bf \Psi}(x)$ satisfies the Riccati-type differential equation
%\begin{equation}
%{\bf \Psi}'=\I-{\bf \Psi}^T[{\V}-E \I]{\bf \Psi}.\label{Psieq2}
%\end{equation}
%\end{theorem}
%\begin{proof}
%We introduce the notation $ {\bf M}(x)=\Y'(x)$. 
%The first assertion can be deduced from the property that 
%\[\Y^T(x) {\bf M}(x)-{ {\bf M}}^T(x)\Y(x)={\rm constant}.\]
%To see this we differentiate the left-hand side, 
%\[\left(Y^T{\bf M}-{\bf M}^TY\right)'={\bf M}^T{\bf M}+\Y^T[\V-E \I]\Y-\left([\V-E \I]\Y\right)^T\Y-{\bf M}^T{\bf M},\]
%and since $\V^T=\V$ this vanishes. 
%Due to the conjointness condition \eqref{conj} and the initial values for ${\bf \Psi}$ in \eqref{YLYRIC} we have 
%%Thus, if for some $x$, $\Psi$ is symmetric so that $Y{Y'}^{-1}=(Y{Y'}^{-1})^T={[({Y'})^T]}^{-1}Y^T$, then
%\begin{equation}\Y^T(x){\bf M}(x)-{\bf M}^T(x)\Y(x)={\bf 0}\label{YTY}\end{equation}
%for $x$ equal to the endpoints of the integration interval and so for all $x$ in the integration interval. Hence for all $x$, multiplying \eqref{YTY} on the left and right by ${[({\bf M})^T]}^{-1}$ and ${\bf M}^{-1}$ shows that ${[({\bf M})^T]}^{-1} \Y^T-\Y{\bf M}^{-1}={\bf 0}$, so that ${\bf \Psi}$ is symmetric.
%
%To verify \eqref{Psieq2}, we can write
%\begin{align*}
%{\bf \Psi}'&={\bf M}{\bf M}^{-1}-\Y{\bf M}^{-1}\Y''{\bf M}^{-1}\\
%%&=I-\Psi[{\bf V}-E I]\Psi\\
%&=\I-{\bf \Psi}^T[{\V}-E \I]{\bf \Psi}.
%\end{align*}
%\end{proof}

The matching condition \eqref{mc2} gives us the following lemma, which also forms the basis of the eigenvalue determination process described in \cite{Hutson}:
\begin{lemma}\label{lemma1}
$E$ is an eigenvalue of \eqref{eq1} when $\det({\bf \Psi}_L(c)-{\bf \Psi}_R(c))=0$. This is equivalent to zero being an eigenvalue of ${\bf \Psi}_L(c)-{\bf \Psi}_R(c)$. Moreover the multiplicity of 0 as an eigenvalue of ${\bf \Psi}_L(c)-{\bf \Psi}_R(c)$ is equal to the multiplicity of $E$ as an eigenvalue of \eqref{eq1}.
\end{lemma}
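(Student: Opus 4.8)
The plan is to exhibit an explicit linear isomorphism between the null space of the matching matrix ${\bf \Psi}_L(c)-{\bf \Psi}_R(c)$ and the space $\mathcal{E}_E$ of those solutions of \eqref{eq1} that satisfy both boundary conditions \eqref{bcs0}. By definition the multiplicity of $E$ equals $\dim\mathcal{E}_E$, whereas the geometric multiplicity of $0$ as an eigenvalue of ${\bf \Psi}_L(c)-{\bf \Psi}_R(c)$ equals the dimension of that null space. Since ${\bf \Psi}_L(c)$ and ${\bf \Psi}_R(c)$ are symmetric (cf.\ \cite{Atkinson}), so is their difference, which is therefore diagonalisable and has equal geometric and algebraic multiplicities at $0$; moreover $\det({\bf \Psi}_L(c)-{\bf \Psi}_R(c))=0$ holds precisely when that null space is nontrivial. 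Hence all three assertions of the lemma reduce to the construction of this isomorphism.

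I would first fix a matrix solution $\Y_L$ of \eqref{eq1} whose $n$ columns form a basis of the space of solutions satisfying the left boundary condition of \eqref{bcs0}; such a basis exists and has exactly $n$ elements because of the rank condition $\mathrm{rank}({\bf A_1}|{\bf A_2})=n$ and the conjointness condition \eqref{conj}, and its linear independence is preserved along $[a,b]$ by the Wronskian identity for \eqref{eq1}. Analogously fix $\Y_R$ for the right boundary condition. Then ${\bf \Psi}_L=\Y_L(\Y_L')^{-1}$ and ${\bf \Psi}_R=\Y_R(\Y_R')^{-1}$ are independent of the chosen bases, satisfy \eqref{YLYRIC}, and are symmetric. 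Now a solution $y$ of \eqref{eq1} belongs to $\mathcal{E}_E$ if and only if $y=\Y_L v_L=\Y_R v_R$ for some vectors $v_L,v_R$, and, since a solution of the second-order system \eqref{eq1} is determined by its value and derivative at the matching point $c$, this holds if and only if $\Y_L(c)v_L=\Y_R(c)v_R$ and $\Y_L'(c)v_L=\Y_R'(c)v_R$ for some $v_L,v_R$.

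Next, choosing $c$ away from the (for fixed $E$, isolated) zeros of $\det\Y_L'$ and $\det\Y_R'$, I would introduce $u:=\Y_L'(c)v_L=\Y_R'(c)v_R$, so that the second matching equation reads $v_L=\Y_L'(c)^{-1}u$ and $v_R=\Y_R'(c)^{-1}u$, while the first becomes $({\bf \Psi}_L(c)-{\bf \Psi}_R(c))u=0$. Reading this backwards, every $u\in\ker({\bf \Psi}_L(c)-{\bf \Psi}_R(c))$ gives $v_L,v_R$ and the common solution $y:=\Y_L\,\Y_L'(c)^{-1}u$, which lies in $\mathcal{E}_E$ since $y(c)={\bf \Psi}_L(c)u={\bf \Psi}_R(c)u$ and $y'(c)=u$, and which is the zero function exactly when $u=0$. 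Therefore $u\mapsto\Y_L\,\Y_L'(c)^{-1}u$ is a linear isomorphism of $\ker({\bf \Psi}_L(c)-{\bf \Psi}_R(c))$ onto $\mathcal{E}_E$; equating dimensions completes the argument.

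The main obstacle is the bookkeeping forced by the singularities of the inverse log-derivative: one has to justify that the matching point $c$ may be taken so that $\Y_L'(c)$ and $\Y_R'(c)$ are invertible — this is the implicit standing assumption behind the whole $\Psi$-based scheme and is legitimate because, for each fixed $E$, the sets $\{x:\det\Y_L'(x)=0\}$ and $\{x:\det\Y_R'(x)=0\}$ are discrete — and, for a singular $c$, re-derive the same correspondence with the matching condition written directly in terms of $\Y_L(c),\Y_L'(c),\Y_R(c),\Y_R'(c)$. A minor additional point needing a line of justification is that the columns of $\Y_L$ span the \emph{entire} left-boundary solution space, i.e.\ that the $2n\times n$ matrix obtained by stacking $\Y_L(a)$ over $\Y_L'(a)$ has full column rank $n$; this is exactly where the rank and conjointness hypotheses on the boundary data are used.
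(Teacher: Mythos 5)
Your proof is correct, and it is essentially the intended argument: the paper states the lemma without proof (deferring to the matching analysis of Hutson/Marletta), and your isomorphism $u\mapsto {\bf Y}_L\,{\bf Y}_L'(c)^{-1}u$ between $\ker({\bf \Psi}_L(c)-{\bf \Psi}_R(c))$ and the space of solutions satisfying both boundary conditions is exactly the correspondence the paper itself relies on later, when eigenfunctions are built from vectors $w_L$ with $({{\bf Y}_R'}^T{\bf Y}_L-{\bf Y}_R^T{\bf Y}_L')(c,E)\,w_L={\bf 0}$. Your treatment of the only delicate points --- symmetry of the matching matrix to identify algebraic and geometric multiplicity, and invertibility of ${\bf Y}_L'(c)$, ${\bf Y}_R'(c)$, which is implicit in the lemma since ${\bf \Psi}_L(c)$ and ${\bf \Psi}_R(c)$ must be finite for the statement to make sense --- is also adequate.
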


%ook ergens opmerken dat $\Psi$ symmetrisch is en bijgevolg diagonaliseerbaar is en eigenwaarden heeft of zoiet?

%om index te bekomen moeten we tellen hoeveel keer $det(Y)=0$ (met multipliciteit), dit komt overeen met $det(\Psi)=0$, en aangezien det het product is van de eigenwaarden is dan 0 een eigenwaarde van $\Psi$ en dan is atan van die eigenwaarde 0 (of $\pi$) $=>$ pass through multiple of $\pi$.

Let us denote the eigenvalues of ${\bf \Psi}_L$ and ${\bf \Psi}_R$  by $\{\gamma_j^L|j=1,\dots,n\}$ and $\{\gamma_j^R|j=1,\dots,n\}$, respectively. Analogously to the scalar case, the {\em phase angles} $\phi_j^L,\phi_j^R$ are then defined (modulo $\pi$) by
\[\tan \phi_j^L=\gamma_j^L,\tan \phi_j^R=\gamma_j^R\]
and uniquely determined functions when normalized by the conditions
\begin{align}
\phi_1^L\leq\phi_2^L\leq\dots\leq\phi_n^L\leq\phi_1^L+\pi\\
\phi_1^R\leq\phi_2^R\leq\dots\leq\phi_n^R\leq\phi_1^R+\pi\\
0\leq\phi_j^L(a)<\pi, 0<\phi_j^R(b)\leq\pi.
\end{align}
Choose $c\in[a,b]$ and let the eigenvalues of $[{\bf \Psi}_L(c)-{\bf \Psi}_R(c)][\I+{\bf \Psi}_L(c){\bf \Psi}_R(c)]^{-1}$ be $\tan \omega_j$
where the $\omega_j$ are normalized by the condition
\begin{equation}0\leq\omega_j<\pi.\label{normali}\end{equation}
We can now formulate the indexing function for the matrix case.

\begin{theorem} The function ${\mathcal I}(E)$ defined as
\[{\mathcal I}(E)=\frac{1}{\pi}\left(\sum_{j=1}^n \phi_j^L(c)-\sum_{j=1}^n \phi_j^R(c)-\sum_{j=1}^n \omega_j(c)\right)+n\]
 is an integer valued increasing step function of $E$ whose points of increase are the eigenvalues of the problem \eqref{eq1}. When $E$ increases through an eigenvalue of multiplicity $m$, ${\mathcal I}$ increases by $m$.
\end{theorem}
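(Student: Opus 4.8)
The plan is to verify four properties of ${\mathcal I}(E)$: (a) it is integer valued; (b) it is non-decreasing in $E$; (c) it is constant on every interval containing no eigenvalue of \eqref{eq1}; and (d) across an eigenvalue of multiplicity $m$ it jumps upward by exactly $m$. The two engines of the proof are a determinant identity relating the three sums of angles, and the Loewner monotonicity in $E$ of the symmetric matrices ${\bf \Psi}_L(c)$ and ${\bf \Psi}_R(c)$, obtained by differentiating the Riccati equation \eqref{Psieq2}. Call $E$ \emph{regular} if ${\bf \Psi}_L(c),{\bf \Psi}_R(c)$ are finite and $\det(\I+{\bf \Psi}_L(c){\bf \Psi}_R(c))\neq 0$; the exceptional values of $E$ are isolated and are treated by continuity of the shooting data in $E$.

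For (a), start from the elementary identity $\det(\I+i\Psi)=\prod_k(1+i\tan\phi_k)=e^{\,i\sum_k\phi_k}/\prod_k\cos\phi_k$, valid for any real symmetric $\Psi$ with eigenvalue-arctangents $\phi_k$ and for \emph{any} choice of branch of the $\phi_k$. Applying it to $\Psi={\bf \Psi}_L(c)$ and to $\Psi=-{\bf \Psi}_R(c)$, and inserting the factorisation
\[(\I+i{\bf \Psi}_L)(\I-i{\bf \Psi}_R)=(\I+{\bf \Psi}_L{\bf \Psi}_R)\bigl(\I+i({\bf \Psi}_L-{\bf \Psi}_R)(\I+{\bf \Psi}_L{\bf \Psi}_R)^{-1}\bigr),\]
one obtains, after taking determinants and using that the $\tan\omega_j(c)$ are the eigenvalues of $({\bf \Psi}_L-{\bf \Psi}_R)(\I+{\bf \Psi}_L{\bf \Psi}_R)^{-1}$,
\[e^{\,i\left(\sum_j\phi_j^L(c)-\sum_j\phi_j^R(c)-\sum_j\omega_j(c)\right)}=\det(\I+{\bf \Psi}_L{\bf \Psi}_R)\,\frac{\prod_j\cos\phi_j^L(c)\,\prod_j\cos\phi_j^R(c)}{\prod_j\cos\omega_j(c)}\ \in\ \mathbb{R}.\]
Hence $\sum_j\phi_j^L(c)-\sum_j\phi_j^R(c)-\sum_j\omega_j(c)\in\pi\mathbb{Z}$, so ${\mathcal I}(E)\in\mathbb{Z}$ at every regular $E$, and then everywhere.

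For (b)-(d), differentiate \eqref{Psieq2} in $E$: writing $\dot{\bf \Psi}=\partial{\bf \Psi}/\partial E$ and using ${\bf \Psi}^T={\bf \Psi}$ gives the linear Lyapunov equation $\dot{\bf \Psi}'+N^T\dot{\bf \Psi}+\dot{\bf \Psi}N={\bf \Psi}^2$ with $N=[{\bf V}-E\I]{\bf \Psi}$ and positive semidefinite forcing. Since ${\bf \Psi}_L(a)={\bf A_2}{\bf A_1}^{-1}$ is $E$-independent, $\dot{\bf \Psi}_L(a)={\bf 0}$, and integrating forward to $c$ gives $\dot{\bf \Psi}_L(c)\succeq{\bf 0}$; integrating from $b$ backwards gives $\dot{\bf \Psi}_R(c)\preceq{\bf 0}$. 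Thus the ordered eigenvalues of ${\bf \Psi}_L(c)$ are non-decreasing in $E$ and those of ${\bf \Psi}_R(c)$ non-increasing, so $\sum_j\phi_j^L(c,E)$ is non-decreasing, $\sum_j\phi_j^R(c,E)$ non-increasing, and $\partial_E({\bf \Psi}_L(c)-{\bf \Psi}_R(c))\succeq{\bf 0}$, so the eigenvalues of ${\bf \Psi}_L(c)-{\bf \Psi}_R(c)$ are non-decreasing in $E$. On an interval free of eigenvalues, Lemma~\ref{lemma1} gives $\det({\bf \Psi}_L(c)-{\bf \Psi}_R(c))\neq 0$, so no eigenvalue of $({\bf \Psi}_L-{\bf \Psi}_R)(\I+{\bf \Psi}_L{\bf \Psi}_R)^{-1}$ vanishes; hence each $\omega_j(c,E)$ stays in $(0,\pi)$ and is continuous, $\sum_j\phi_j^L-\sum_j\phi_j^R-\sum_j\omega_j$ is continuous and integer valued, so constant: this is (c). At an eigenvalue $E_k$ of multiplicity $m$, Lemma~\ref{lemma1} says $0$ is an eigenvalue of ${\bf \Psi}_L(c)-{\bf \Psi}_R(c)$ of multiplicity $m$; by the $E$-monotonicity of those eigenvalues, exactly $m$ of them cross $0$ from $-$ to $+$ as $E$ passes $E_k$, and a short computation (using ${\bf \Psi}_L v={\bf \Psi}_R v$ on the kernel) shows the corresponding eigenvalues of $({\bf \Psi}_L-{\bf \Psi}_R)(\I+{\bf \Psi}_L{\bf \Psi}_R)^{-1}$ also change sign from $-$ to $+$. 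Thus $m$ of the $\omega_j$ wrap from $\pi^-$ to $0$, $\sum_j\omega_j$ drops by $m\pi$ while $\sum_j\phi_j^L-\sum_j\phi_j^R$ stays continuous, and ${\mathcal I}$ jumps up by exactly $m$ (an integer, by (a)); this gives (b) and (d). Finally, to match ${\mathcal I}(E)$ with the number of eigenvalues below $E$, one checks that below the spectrum (no zero of $\det{\bf Y}_L$ or $\det{\bf Y}_R$) the boundary normalisations force the bracket to equal $-n\pi$, so ${\mathcal I}=0$ there.

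I expect the main obstacle to be the branch and continuity bookkeeping behind the monotonicity statements: one must show that $\sum_j\phi_j^L(c,E)$, assembled from the continuous $x$-propagation of the ordered arctangents of the eigenvalues of ${\bf \Psi}_L$ (normalised at $x=a$), depends continuously and monotonically on $E$ even where eigenvalues of ${\bf \Psi}_L(c)$ run off to $\pm\infty$ (the singularities of ${\bf \Psi}$, through which the phase angles pass continuously at $\pi/2\bmod\pi$) and where ordered eigenvalues collide; the same care is needed for the $\omega_j$ and at the exceptional $E$ where the determinant identity degenerates. This is exactly the matrix oscillation-theoretic input that Atkinson and Marletta obtain through the unitary matrix ${\bf \Theta}=({\bf Y}'+i{\bf Y})({\bf Y}'-i{\bf Y})^{-1}$; here it has to be reproduced working with ${\bf \Psi}$ alone.
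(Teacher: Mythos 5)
Your proposal follows the same skeleton as the paper's own proof: integer-valuedness from the arctangent-addition relation between $\sum_j\phi_j^L-\sum_j\phi_j^R$ and $\sum_j\omega_j$ (your determinant/Cayley identity is an explicit version of what the paper simply asserts in deriving \eqref{wja}); change points only at eigenvalues via continuity of the $\phi_j$'s, the normalization of the $\omega_j$'s and Lemma \ref{lemma1}; and the jump size $m$ from the $E$-monotonicity of the eigenvalues of ${\bf \Psi}_L(c)$ and ${\bf \Psi}_R(c)$. (Minor point: the factorisation $(\I+i{\bf \Psi}_L)(\I-i{\bf \Psi}_R)=(\I+{\bf \Psi}_L{\bf \Psi}_R)\bigl(\I+i({\bf \Psi}_L-{\bf \Psi}_R)(\I+{\bf \Psi}_L{\bf \Psi}_R)^{-1}\bigr)$ is not literally a matrix identity with the factor order you wrote, but it is correct at the level of determinants, which is all you use.)

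The one genuine gap is in how you establish the monotonicity. You get $\partial_E{\bf \Psi}_L(c)$ positive semidefinite by differentiating the Riccati equation \eqref{Psieq2} in $E$ and integrating the resulting Lyapunov equation from $a$ to $c$ with zero initial data. That argument requires ${\bf \Psi}_L$ to stay finite on all of $[a,c]$; but ${\bf \Psi}_L=\Y_L(\Y_L')^{-1}$ generically blows up repeatedly inside $(a,c)$ (whenever $\det\Y_L'=0$ --- exactly the singularities the paper emphasizes), and across such a singularity the Lyapunov integration, and with it Loewner monotonicity of ${\bf \Psi}_L(c)$ in $E$, cannot simply be continued. The paper avoids this by invoking Atkinson's Theorem 10.2.3, i.e.\ the closed formulas \eqref{fr1}--\eqref{fr2} expressing $\partial_E{\bf \Psi}_{L}(c)$ and $\partial_E{\bf \Psi}_{R}(c)$ through the wavefunction integrals $\int\Y^T\Y\,dt$; these hold whenever $\Y'(c)$ is invertible, irrespective of interior singularities, and they give \emph{strict} definiteness, which you also need (with mere semidefiniteness the zero eigenvalue of the matching matrix need not actually cross, only touch). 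You flag the singularity and branch bookkeeping as ``the main obstacle'' but do not resolve it; with \eqref{fr1}--\eqref{fr2} (or an equivalent argument via the unitary matrix ${\bf \Theta}$ of Atkinson--Marletta) substituted for your Lyapunov step, the rest of your argument --- including your local crossing-direction computation for the $\omega_j$, which is if anything more explicit than the paper's deduction from \eqref{wja} --- goes through at essentially the same level of rigour as the paper's proof.
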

\begin{proof} Let us use the notations
\begin{equation}
S({\bf \Psi}_L(x))=\sum_{j=1}^n \phi_j^L(c), \quad
S({\bf \Psi}_R(x))=\sum_{j=1}^n \phi_j^R(c).
\end{equation}
By definition of the $\omega_j$,
\[\sum_{j=1}^n \omega_j(c)=S\left([{\bf \Psi}_L-{\bf \Psi}_R][\I+{\bf \Psi}_L{\bf \Psi}_R]^{-1}\right) \mod \pi.\]
Also, using the arctangent addition formula,
\[S\left([{\bf \Psi}_L-{\bf \Psi}_R][\I+{\bf \Psi}_L{\bf \Psi}_R]^{-1}\right) =(S({\bf \Psi}_L)-S({\bf \Psi}_R)) \mod \pi.\]
Combining these we have
\begin{equation}\sum_{j=1}^n \omega_j(c)=(\sum_{j=1}^n \phi_j^L(c)-\sum_{j=1}^n \phi_j^R(c)) \mod \pi,\label{wja}\end{equation}
which shows that ${\mathcal I}$ is integer valued.

Since each $\phi_j^L$ and $\phi_j^R$ is a continuous function of $E$, the only way that ${\mathcal I}$ can change value is if a $\omega_j$ passes through a multiple of $\pi$ and jumps because of the normalization \eqref{normali}. When this happens, 0 is an eigenvalue of ${\bf \Psi}_L(c)-{\bf \Psi}_R(c)$ or equivalently (by Lemma \ref{lemma1}) $E$ is an eigenvalue of \eqref{eq1}.

It remains to show that ${\mathcal I}$ increases by $m$ when $E$ passes through an eigenvalue of multiplicity $m$. Having Lemma \ref{lemma1}, it is sufficient to show that each $\omega_j$ can only {\em increase} through $\pi$ with increasing $E$, thus jumping down to $0$ again and causing ${\mathcal I}$ to increase. %This is equivalent to showing that the eigenvalues of $\Psi_L-\Psi_R$ are increasing with increasing $E$.
Theorem 10.2.3 in \cite{Atkinson} says that
\begin{equation}\frac{\partial {\bf \Psi}_L(c;E)}{\partial E}=[({\Y'_L}^{T})(c;E)]^{-1}\int_a^c \Y_L^T(t;E)\Y_L(t,E)dt\,[{\Y'_L}(c,E)]^{-1},\label{fr1}\end{equation}
and
\begin{equation}\frac{\partial {\bf \Psi}_R(c;E)}{\partial E}=-[({\Y'_R}^{T})(c;E)]^{-1}\int_c^b \Y_R^T(t;E)\Y_R(t,E)dt\,[{\Y'_R}(c,E)]^{-1}.\label{fr2}\end{equation}
The right hand side of \eqref{fr1} is positive definite and the right hand side of \eqref{fr2} is negative definite.  From Theorem V.2.3 in \cite{Atkinson}, we know that this means that the eigenvalues of ${\bf \Psi}_L(c)$ are increasing functions of $E$ and that the eigenvalues of ${\bf \Psi}_R(c)$ are decreasing functions of $E$. From \eqref{wja} it then follows that each $\omega_j$ increases through $\pi$ with increasing $E$.
\end{proof}
The value of ${\mathcal I}(E)-n$ for $E<E_0$ is equal to $-n$, independent of the potential matrix ${V}$ and the boundary conditions. This is clear for the trivial case of a diagonal potential matrix, where the system reduces to a superposition of $n$ scalar problems. For general potential matrices, this result can be derived from the close connection between the ${\mathcal I}(E)$ function defined here and the indexing function ${M}(E)$ proposed by Marletta in \cite{Marletta,Marlettaphd}.

 As their scalar equivalents $\theta$, the phase angles can never decrease through a multiple of $\pi$:
\begin{lemma}\label{lemman}
The `phase angles' $\phi_j^L$ and $\phi_j^R$ can only increase through multiples of $\pi$ with increasing $x$.
\end{lemma}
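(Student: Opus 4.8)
The plan is to transcribe the scalar argument following \eqref{thetaeq} — there $\theta$ crosses a multiple of $\pi$ exactly when $y=0$, and at such a point $\theta'=1>0$ — into the matrix setting. A phase angle $\phi_j$ (of either ${\bf \Psi}_L$ or ${\bf \Psi}_R$) crosses a multiple of $\pi$ precisely when the corresponding eigenvalue $\gamma_j$ of ${\bf \Psi}=\Y(\Y')^{-1}$ crosses $0$, since $\tan\phi_j=\gamma_j$ with $\tan$ increasing near $0$ (and the relabelling of the $\phi_j$ forced by the ordering normalisation does not affect the direction of such a crossing). So it suffices to show that every eigenvalue branch of ${\bf \Psi}(x)$ which vanishes at some point $\bar x$ is strictly increasing there.

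First I would recall that ${\bf \Psi}_L$ and ${\bf \Psi}_R$ are symmetric for all $x$ and each satisfies the Riccati equation \eqref{Psieq2}, which with ${\bf \Psi}^T={\bf \Psi}$ reads ${\bf \Psi}'={\I}-{\bf \Psi}[{\V}(x)-E{\I}]{\bf \Psi}$; this ODE holds irrespective of the direction in which the defining solution $\Y$ was propagated, so one computation covers both $L$ and $R$, and ``increasing with $x$'' is indeed the right monotonicity in both cases (exactly as $\theta_R$ increases through multiples of $\pi$ as $x$ increases in the scalar case). Now fix $\bar x$ with $\gamma_j(\bar x)=0$ and let $v$ be a unit eigenvector, so ${\bf \Psi}(\bar x)v={\bf 0}$ and hence $v^T{\bf \Psi}(\bar x)={\bf 0}^T$ as well. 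Then
\[ v^T{\bf \Psi}'(\bar x)v = v^Tv - \bigl(v^T{\bf \Psi}(\bar x)\bigr)[{\V}(\bar x)-E{\I}]\bigl({\bf \Psi}(\bar x)v\bigr) = 1 > 0. \]
If $0$ is a simple eigenvalue, the first-order perturbation formula for a simple eigenvalue of a symmetric matrix gives $d\gamma_j/dx|_{\bar x}=v^T{\bf \Psi}'(\bar x)v=1$, so $\gamma_j$, and therefore $\phi_j$, increases strictly through the multiple of $\pi$. If $0$ has multiplicity $p$, then (as ${\bf \Psi}$ is a smooth symmetric matrix function of $x$ in a neighbourhood of $\bar x$) the $p$ eigenvalue branches through $0$ have derivatives at $\bar x$ equal to the eigenvalues of the compression $P{\bf \Psi}'(\bar x)P$ of ${\bf \Psi}'(\bar x)$ to the kernel $K=\ker{\bf \Psi}(\bar x)$, $P$ being the orthogonal projector onto $K$; by the displayed identity this compression is the identity on $K$, which is positive definite, so all $p$ branches — and hence all $p$ corresponding phase angles — increase. (Near a singularity of ${\bf \Psi}$ one has $\gamma_j\to\pm\infty$, i.e.\ $\phi_j$ tends to an odd multiple of $\pi/2$, not a multiple of $\pi$, so such points are irrelevant; the whole argument is purely local at a finite, smooth point.)

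The step I expect to be the main obstacle is the degenerate case: one must justify that the eigenvalue branches through $0$ are differentiable and that their derivatives are governed by the compression of ${\bf \Psi}'(\bar x)$ to $\ker{\bf \Psi}(\bar x)$. This is exactly where the symmetry of ${\bf \Psi}$ enters, and it parallels the way the theorem above combined the definiteness of $\partial{\bf \Psi}/\partial E$ with Theorem V.2.3 of \cite{Atkinson}; indeed one may instead invoke that theorem with the roles of $x$ and $E$ interchanged, applied on the subspace $K$ where ${\bf \Psi}'$ is positive definite, to conclude directly that a vanishing eigenvalue of ${\bf \Psi}(x)$ is locally increasing.
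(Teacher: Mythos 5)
Your proposal is correct and follows essentially the same route as the paper: both reduce the claim to showing that ${\bf \Psi}'$ is positive definite on $\ker{\bf \Psi}(\bar x)$ and verify this via the Riccati equation \eqref{Psieq2}, computing $w^T{\bf \Psi}'w=w^Tw>0$ for any $w$ with ${\bf \Psi}(\bar x)w={\bf 0}$. The only difference is that you spell out the perturbation-theoretic step (simple and degenerate eigenvalue branches via the compression to the kernel), which the paper simply delegates to Atkinson.
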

\begin{proof}
This is equivalent to showing that the derivative of the matrix function ${\bf \Psi}$ is positive definite when applied to eigenvectors of ${\bf \Psi}(x)$ associated with a zero eigenvalue (see \cite{Atkinson}). Supposing that for some $x$ one or more of the $\gamma_j(x)$ is zero, we consider any column matrix $w$ such that
\[{\bf \Psi}(x)w={\bf 0}\,\quad w^Tw>0.\]
Then
\[w^T {\bf \Psi}' w=w^T (I-{\bf \Psi}^T(\V-E \I){\bf \Psi})w=w^Tw >0.\]
This proves that the eigenvalues $\gamma_j(x)$ can only increase through 0. Thus the functions $\phi_j(x)$ are strictly increasing when they are multiples of $\pi$. 
\end{proof}

How can we now compute ${\mathcal I}(E)$ numerically? We need to integrate ${\bf \Psi}_L$ from $x=a$ to $x=c$, and ${\bf \Psi}_R$ from $x=b$ to $x=c$, while following the quantities $S({\bf \Psi}_L(x))$, $S({\bf \Psi}_R(x))$ continuously
and count the number of multiples of $\pi$ in each. %We will again use the idea of coefficient approximation. 
We assume that the matrix function $\V=\V_0$ is piecewise constant and use the second order coefficient approximation method for the integration of the ${\bf \Psi}$ matrix, be it ${\bf \Psi}_{L}$ or ${\bf \Psi}_{R}$, across a mesh interval $[x_i,x_f]$ in the case ${\bf \Psi}_{L}$ or $[x_f,x_i]$ in the case ${\bf \Psi}_{R}$, with initial values known at $x_i$. 
As a first step, we consider again the diagonalization process. Since
\[{ \D}^{-1}{\bf \Psi}{ \D}={ \D}^{-1}\Y{ \D}{ \D}^{-1}{(\Y')}^{-1}{ \D}=\Y^{ D}{{(\Y'^{ D})}}^{-1}={\bf \Psi}^{ D}\]
we know that the eigenvalues of ${\bf \Psi}$ are precisely the same as those of ${\bf \Psi}^{ D}$. So we may forget about ${\bf \Psi}$ and think in terms of ${\bf \Psi}^{ D}$, which is given by 
\[{\bf \Psi}^{\D}(x)=\left[\delta{\bet}+{\bx}{\bf \Psi}^\D(x_i)\right]\left[\bx+(\Z/\delta){\bet}{\bf \Psi}^\D(x_i)\right]^{-1},\]
where $\delta=x-x_i$, $\bx = \bx(\Z(\delta)),\;\bet=\bet(\Z(\delta))$ and $\Z(\delta)=(\V_0^D-E \I)\delta^2$.

%The matrix $U^{\D}$ may be written as:
%\begin{eqnarray}
%U^\D(x)&={\bx}(\Z)U^\D(x_0)+\delta{\bet}(\Z)W^\D(x_0),\\
%{W}^\D(x)&=(\Z/\delta){\bet}(\Z)U^\D(x_0)+{\bx}(\Z)W^\D(x_0)\end{eqnarray}
%...
%Let the phase angles of ${\bf \Psi}^\D$ be $\phi_j$, for $j=1,\dots,n$, determined uniquely as continuous functions of $x$ and $E$ subject to the initial conditions
%\be
%\phi_j(x_0)=\alpha_j(x_0),
%\ee
%where $\alpha_j={\rm atan}(\gamma_j)\in [0,\pi)$ with $\gamma_j$ eigenvalue of ${\bf \Psi}^\D$, for $j=1,\dots,n$. Due to the normalization, $\alpha_j$ may be computed unambiguously (apart from ordering, which is unimportant) for any $x$.
In order to compute $S({\bf \Psi}^\D)$ correctly, and not just modulo $\pi$, we need to know the number of times that some angle $\phi_j$ passes through a multiple of $\pi$ as $x$ moves from $x_i$ to $x_{f}$. To help us in this process we introduce the matrix ${\bf \Omega}$
\begin{align}
{\bf \Omega}(x)&=\left[\bx-\delta\bet{\bf \Psi}^\D(x_i)\right]^{-1}\left[\bx\Psi^\D(x_i)+\delta\bet\right]\\
&=\left[\I-\delta\bx^{-1}\bet{\bf \Psi}^\D(x_i)\right]^{-1}\left[{\bf \Psi}^\D(x_i)+\delta\bx^{-1}\bet\right]
\end{align}

\begin{lemma}\label{lemma22}
0 is an eigenvalue of ${\bf \Omega}$ if and only if 0 is an eigenvalue of ${\bf \Psi}^D$.
\end{lemma}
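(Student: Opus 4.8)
The plan is to exhibit $\mathbf{\Omega}(x)$ and ${\bf \Psi}^\D(x)$ as the same matrix up to multiplication on either side by an invertible matrix. Such multiplications preserve rank, equivalently nullity, so the two matrices are singular together, which is exactly the claim; moreover one gets equal nullities essentially for free, which is convenient later when tracking multiplicities.

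Concretely, I would first write the CP recursion in the $\D$-representation in its factored form, ${\bf \Psi}^\D(x)=N\,M_1^{-1}$, where
\[
N:=\bx\,{\bf \Psi}^\D(x_i)+\delta\bet,\qquad M_1:=\bx+(\Z/\delta)\,\bet\,{\bf \Psi}^\D(x_i),
\]
and then observe that the definition of $\mathbf{\Omega}$ has this very matrix $N$ as its right factor: $\mathbf{\Omega}(x)=M_2^{-1}N$ with $M_2:=\bx-\delta\bet\,{\bf \Psi}^\D(x_i)$. Eliminating $N$ gives
\[
\mathbf{\Omega}(x)=M_2^{-1}\,{\bf \Psi}^\D(x)\,M_1 .
\]
Since $\mathbf{\Omega}(x)$ is assumed to exist, $M_2$ is invertible, and since ${\bf \Psi}^\D(x)$ is assumed to exist, $M_1$ is invertible; hence $\mathbf{\Omega}(x)$ and ${\bf \Psi}^\D(x)$ are equivalent matrices, of equal rank and equal nullity, so $0$ is an eigenvalue of $\mathbf{\Omega}(x)$ iff it is an eigenvalue of ${\bf \Psi}^\D(x)$. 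One can also run the argument without ever forming ${\bf \Psi}^\D(x)$: $\mathbf{\Omega}(x)v=0\iff Nv=0$ because $M_2$ is invertible, while $Nv=0$ for some $v\ne 0$ iff $\ker{\bf \Psi}^\D(x)=\ker(NM_1^{-1})\ne\{0\}$ because $M_1$ is invertible.

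The algebra is immediate; the only point needing care is the status of the two ``denominator'' matrices $M_1$ and $M_2$. The identity above is used precisely in the regime in which both ${\bf \Psi}^\D(x)$ and $\mathbf{\Omega}(x)$ are finite, and there $M_1$ and $M_2$ are invertible by construction, so nothing further is required; I would, however, state this hypothesis explicitly, because $\det M_1$ and $\det M_2$ vanish on different sets ($M_1$ is singular exactly where ${\bf \Psi}$ has a singularity at $x$, $M_2$ on a distinct set), and it is exactly this mismatch that makes $\mathbf{\Omega}$ worth introducing. That bit of bookkeeping is the main — and rather mild — obstacle.
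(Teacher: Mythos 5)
Your proposal is correct and is essentially the paper's own argument: both proofs rest on the observation that ${\bf \Psi}^\D(x)$ and ${\bf \Omega}(x)$ share the numerator factor $N=\bx\,{\bf \Psi}^\D(x_i)+\delta\bet$, so that, with the denominator matrices $M_1$ and $M_2$ invertible, each is singular exactly when $N$ is; your identity ${\bf \Omega}=M_2^{-1}{\bf \Psi}^\D(x)M_1$ just packages this as a matrix equivalence, whereas the paper reaches the same conclusion by passing through $\bx^{-1}{\bf \Psi}^\D$ and its transpose before cancelling the invertible factors. The only differences are cosmetic: you avoid the symmetry/transpose step and you make explicit the invertibility hypotheses on $M_1$ and $M_2$ that the paper leaves implicit.
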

\begin{proof}
0 is an eigenvalue of ${\bf \Psi}^\D$ if and only if 0 is an eigenvalue of 
${\bx}^{-1}{\bf \Psi}^\D=(\delta{\bx}^{-1}{\bet}+{\bf \Psi}^\D(x_i))(\bx+(\Z/\delta){\bet}{\bf \Psi}^\D(x_i))^{-1}$ and of its transpose
${\bf \Psi}^\D{\bx}^{-1}=(\bx+{\bf \Psi}^\D(x_i)(\Z/\delta){\bet})^{-1}(\delta{\bx}^{-1}{\bet}+{\bf \Psi}^\D(x_i))$. 0 is an eigenvalue of ${\bf \Psi}^\D{\bx}^{-1}$ if and only if 0 is an eigenvalue of $(\delta{\bx}^{-1}{\bet}+{\bf \Psi}^\D(x_i))$ and consequently also of ${\bf \Omega}$.
%$\tan(\psi)$ is in the spectrum of $\Omega$ if and only if there exists a non-zero $v$ such that
%\[\cos(\psi)({\bf \Psi}^\D(x_0)+\delta\bx^{-1}\bet)v=\sin(\psi)(I-\delta\bx^{-1}\bet\Psi^\D(x_0))v\]
%or by simplifying
%\be\cos(\psi)(\bx Y^\D(x_0)(x_0)+\delta\bet{{Y'}^\D}(x_0))v=\sin(\psi)\bx(I-\delta\bx^{-1}\bet\Psi^\D(x_0)){{Y'}^\D}(x_0)v\label{Omeq}\ee
%Similarly, $\tan(\phi)$ is an eigenvalue of ${\bf \Psi}^\D={\bf \Psi}^\D^T=(\bx(\Z)+(\Z/\delta){\bet}(\Z){\bf \Psi}^\D)^{-1}(\delta{\bet}(\Z)+{\bx}(\Z)\Psi_0^\D)$ if and only if there is a non-zero $w$ such that
%\[\cos(\phi)(\delta{\bet}+{\bx}{\bf \Psi}^\D(x_0))w=\sin(\phi)(\bx+(\Z/\delta){\bet}{\bf \Psi}^\D(x_0))w\]
%\[\cos(\phi)Y^\D(x){Y^\D}^{-1}(x)w=\sin(\phi)w\]
%or
%\be\cos(\phi)\big({\bx}(\Z)Y^\D(x_0)+\delta{\bet}(\Z)Y'^\D(x_0)\big)w=\sin(\phi)\big((\Z/\delta){\bet}(\Z)Y^\D(x_0)+{\bx}(\Z)Y'^\D(x_0)\big)w\label{Psieq}\ee
%0 is an eigenvalue of $\Omega$ if and only if $\psi=0$ solves \eqref{Omeq}, which holds if $\phi=0$ solves \eqref{Psieq}, which in turn is equivalent to the condition that 0 is an eigenvalue of ${\bf \Psi}^\D$.
\end{proof}

We will use the matrix ${\bf \Omega}$ in order to compute $S({\bf \Psi}^\D(x))$ from $S({\bf \Psi}^\D(x_i))$. 
Let the eigenvalues of ${\bf \Psi}^\D$ be $\tan \phi_j$ and those of ${\bf \Omega}$ be $\tan \varphi_j$, and suppose that,
\[\phi_j=\pi n_j+\rho_j,\quad \varphi_j=\pi m_j+\tau_j,\]
where $n_j$ and $m_j$ are integers and $\rho_j$ and $\tau_j$ lie in $[0,\pi).$ Then we can write
\be S({\bf \Psi}^\D)=S({\bf \Omega})+\sum_{j=1}^n (\rho_j-\tau_j)+\pi\sum_{j=1}^n(n_j-m_j).\label{eqP}\ee
The $\tau_j$ and $\rho_j$ are easily computed directly from ${\bf \Omega}$ and ${\bf \Psi}^\D$, because the number of multiples of $\pi$  is unambiguous.
The quantity $S({\bf \Omega})$ can be computed correctly using the identity 
\be S({\bf \Omega})=S({\bf \Psi}^\D(x_i))+S(\delta\bx^{-1}\bet)\label{SOm}\ee
which is based on the arctangent addition formula.
% \[{\rm atan}\, u + {\rm atan}\, v = {\rm atan}\,\left(\frac{u+v}{1-uv}\right) \mod \pi.\]
%\begin{proof} Proof of identity. zie testje.m voor de testjes klopt het alleszins.
%\end{proof}
The term $S(\delta\bx^{-1}\bet)$ in \eqref{SOm} is computed by applying a Pr\"ufer transformation to each diagonal term in turn. %dat is $S(\delta\bx^{-1}\bet)=\sum_{k=1}^n \theta_k$ where $\tan\theta_k=d_k$ with the $d_k$ the elements on the diagonal of $\delta\bx^{-1}\bet$. Zoals in de scalar piecewise constant potential case (dan daar dat vorige sec idee beschrijven)? ahnee ik heb hier niet die atan ((sqrt(Z)/h)*y/y')!!! die sqrt(Z) staat er niet in. hoe doen zodat ik geen multiples van pi mis? of kan ik wel dat SLEDGE idee gebruiken want het is toch lokaal over dit interval? en dan met die rescaling at jumps in S?
Note that $\delta\bx^{-1}\bet$ equals the $R$-matrix ${\bf \Psi}^\D(x_i+\delta)$ one obtains after one constant coefficient approximation propagation step when starting from the initial values $\Y(x_i)={\bf 0}, \Y'(x_i)={\I}$ and thus ${\bf \Psi}^\D(x_i)={\bf 0}$. % and one has a constant diagonal matrix ${\bf V}_0^\D$.
%since the propagation algorithm is then
%\[\left(\begin{matrix}\bx&\delta\bet\\{\bf Z}/\delta)\bet&\bx\end{matrix}\right)\left(\begin{matrix}0\\I\end{matrix}\right)=\left(\begin{matrix}\delta\bet\\\bx\end{matrix}\right)\]
Therefore, for each $j$ let $y_j$ be the solution of the initial value problem
\[-y_j''+d_jy_j=0,\quad y_j(x_i)=0,\quad y'_j(x_i)=1,\]
where the $d_j$ are the elements of the diagonal matrix $E \I-{\V}_0^\D$.
The Pr\"ufer transformation 
\[y_j=\rho_j\sin\theta_j,\quad y_j'=\rho_j\cos\theta_j\]
is applied. Then $\theta_j$ satisfies the initial value problem
\[\theta_j'=\cos^2\theta_j+d_j\sin^2\theta_j,\quad \theta_j(x_i)=0\]
which means that
\begin{equation}\tan \theta_j(x_i+\delta)= \delta\eta_0(-d_j\delta^2)/\xi(-d_j\delta^2)\label{theteq}\end{equation}
Suppose that 
\[\theta_j(x_i+\delta)=s_j\pi+\kappa_j\]
with $\kappa_j$ in $[0,\pi)$.
Then we may compute
\[S(\delta\bx^{-1}\bet)=\sum_{j=1}^n\theta_j=\sum_{j=1}^n \kappa_j +\sum_{j=1}^n s_j\pi\]
The value $\kappa_j$ is given by \eqref{theteq} and $\sum_{j=1}^n s_j$ can be obtained by applying the same procedure as described at the end of the previous section using \eqref{sledge1}. That is: $s_j$ equals the number of integers in the interval $(0,\sqrt{d_j}h_i/\pi)$ when $d_j>0$. When $d_j\leq 0$, $s_j$ equals zero. In this way no multiples of $\pi$ in $\theta_j$ are missed. This is an improvement with respect to the procedure used in \cite{Marletta} and \cite{Ledoux}, where each eigenvalue of the wavefunction matrix is supposed to pass through zero not more than once when integrating over a mesh interval.% Zo missen we geen veelvouden van pi, dus dit is een verbetering tov mijn vorige procedure en die van Marletta?.

The only remaining unknown quantity in \eqref{eqP} is $\sum_{j=1}^n(n_j-m_j)$. 
When $x=x_i$, this sum vanishes. We will show that the same is true for all $x$. We know that a $n_j$ can only change when the corresponding $\phi_j$ passes through a multiple of $\pi$, and when this happens then there will be some $\varphi_k$ passing through a multiple of $\pi$ as well (by Lemma \ref{lemma22}), so $m_k$ will also change. In fact, there will be just as many $m_k$'s changing as there are $n_j$'s changing. From lemma \ref{lemman} we can deduce that $n_j$ can only increase. 
It only remains to show that also $m_k$ can only increase.
Let us define ${\hat x}\in[x_i,x_{f}]$ as the $x$ position where $\phi_j$ and $\varphi_k$ pass through a multiple of $\pi$.
To show that the angle $\varphi_k$ is increasing in ${\hat x}$, we define first the matrix ${\bf \Upsilon}$:
\[{\bf \Upsilon}(x)={\bf \Psi}^\D(x_i)+{\bf F}({\delta}),\quad {\rm with}\quad {\bf F}({\delta})={\delta}\bx^{-1}(\Z({\delta}))\bet(\Z({\delta})) \] 
which has a zero eigenvalue if and only if ${\bf \Omega}$ (or ${\bf \Psi}^D$) has a zero eigenvalue. Since ${\bf \Upsilon}'(x)=\I-(\V_0^\D-E\I){\bf F}({\delta})^2$ is a positive definite diagonal matrix, the eigenvalues of ${\bf \Upsilon}$ are increasing through zero.
Now suppose that as $x$ increases through ${\hat x}$, some `angle' of ${\bf \Upsilon}$ called $\vartheta_i$ increases through a multiple of $\pi$. Let $\varphi_k$ be the angle which is closer to $\vartheta_i$ (modulo $\pi$) than any other $\varphi_j$, for all $x$ close to ${\hat x}$. If $\varphi_k$ does not increase through a multiple of $\pi$ as $x$ increases through ${\hat x}$, then for all sufficiently large $x<{\hat x}$, we will have
\[\varphi_k(x)\geq m\pi,\quad\vartheta_i(x)<l\pi,\]
where $\varphi_k({\hat x})=m\pi$, and
%\[\vartheta_i(x)<l\pi,\]
%where 
$\phi_i({\hat x})=\vartheta_i({\hat x})=l\pi$.
Let us now define
\begin{align}\nonumber{\bar {\bf \Omega}}(x;\epsilon)&=\left[\I-{\bf F}({\delta}){\bf \Psi}^\D(x_i)\right]^{-1}\left[{\bf \Psi}^\D(x_i)+{\bf F}({\delta})+\epsilon (\I+{\bf F}({\delta})^2)\right], \\\nonumber
{\bar {\bf \Upsilon}}(x;\epsilon)&={\bf \Psi}^\D(x_i)+{\bf F}({\delta})+\epsilon  (\I+{\bf F}({\delta})^2)\end{align}
where $\epsilon$ is a small positive parameter close to 0. The derivative
\[\frac{\partial {\bar {\bf \Upsilon}}(x;\epsilon)}{\partial \epsilon}=\I+{\bf F}^2\]
is positive definite, and consequently ${\bar \vartheta}_i$ is an increasing function of $\epsilon$. 
This means that in the neighbourhood of ${\hat x}$ the angle ${\bar \vartheta}_i$ of ${\bar {\bf \Upsilon}}(x;\epsilon)$ will be just a bit larger than the angle $\vartheta_i$ of ${\bf \Upsilon}(x)={\bar {\bf \Upsilon}}(x;0)$. 
If we define the column vector $w$ such that ${\bf \Omega}({\hat x}) w = {\bf 0}$ or equivalently ${\bf \Psi}^D(x_i) w = -{\bf F}({\hat \delta}) w$. Then
\begin{align}
\nonumber w^T \frac{\partial {\bar {\bf \Omega}}({\hat x};\epsilon)}{\partial \epsilon} w &= w^T \left[\I-{\bf F}({\hat \delta}){\bf \Psi}^D(x_i)\right]^{-1} \left[\I+{\bf F}({\hat \delta})^2\right]w\\\nonumber
&=w^T \left[\I-{\bf F}({\hat \delta}){\bf \Psi}^D(x_i)\right]^{-1} \left[\I-{\bf F}({\hat \delta}){\bf \Psi}^D(x_i)\right]w=w^Tw>0
\end{align}
and the (zero) eigenvalue of ${\bar {\bf \Omega}}(x;\epsilon)$ increases with $\epsilon$ in ${\hat x}$. As a consequence the angle ${\bar \varphi}_k$ of ${\bar {\bf \Omega}}(x;\epsilon)$ will be larger than $\varphi_k$ of ${\bf \Omega}(x)={\bar {\bf \Omega}}(x;0)$ in a small neighbourhood of ${\hat x}$. 
For an arbitrarily small positive value of $\epsilon$ there exists thus an $x$-value close to ${\hat x}$ such that
\[{\bar \varphi}_k(x)>m\pi,\quad {\bar \vartheta}_i(x)=l\pi.\]
But this contradicts the fact that ${\bar {\bf \Upsilon}}(x;\epsilon)$ has a zero eigenvalue whenever ${\bar {\bf \Omega}}(x;\epsilon)$ has a zero eigenvalue. Thus $\varphi_k$ can only increase through multiples of $\pi$ and $m_k$ increases by one whenever $n_j$ increases by one.

We have then finally the following result
\begin{theorem}
Let the eigenvalues of ${\bf \Psi}^\D$ be represented as $tan(\rho_j)$ and the eigenvalues of ${\bf \Omega}$ as $tan(\tau_j)$, for $j=1,\dots,n,$ where
\be 0\leq\rho_j<\pi,\quad 0\leq\tau_j<\pi\ee
Then 
\[S({\bf \Psi}^\D(x))=S({\bf \Psi}^\D(x_i))+S(\delta\bx^{-1}\bet)+\sum_{j=1}^n (\rho_j-\tau_j).\]
\end{theorem}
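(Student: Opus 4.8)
The plan is to recognise the statement as the exact (``unwrapped'') counterpart of the mod-$\pi$ identity \eqref{SOm}, and to assemble it from the three ingredients already prepared above: the arithmetic decomposition \eqref{eqP}, the arctangent--addition identity \eqref{SOm}, and the vanishing of the winding-number defect $\sum_{j=1}^n(n_j-m_j)$. So at the level of the theorem there is essentially nothing left to prove beyond bookkeeping; the content sits in the lemmas and in the $\epsilon$-perturbation argument that precede it.

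First I would record \eqref{eqP} from the definitions. Writing the continuously tracked phase angles as $\phi_j=\pi n_j+\rho_j$ and $\varphi_j=\pi m_j+\tau_j$ with $\rho_j,\tau_j\in[0,\pi)$ and $n_j,m_j\in\mathbb{Z}$, and summing $S({\bf \Psi}^\D)=\sum_j\phi_j$, $S({\bf \Omega})=\sum_j\varphi_j$ over $j$, one gets
\[
S({\bf \Psi}^\D(x))-S({\bf \Omega}(x))=\sum_{j=1}^n(\rho_j-\tau_j)+\pi\sum_{j=1}^n(n_j-m_j),
\]
which is just the regrouping of each running sum into integer and fractional parts; the $\rho_j,\tau_j$ here are exactly the representatives in $[0,\pi)$ of the eigenvalues of ${\bf \Psi}^\D$ and ${\bf \Omega}$ named in the theorem, and the sum over $j$ is insensitive to how one pairs the two index sets. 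Next I would invoke \eqref{SOm}, $S({\bf \Omega})=S({\bf \Psi}^\D(x_i))+S(\delta\bx^{-1}\bet)$: its mod-$\pi$ form is the matrix arctangent--addition formula already used in the proof of the indexing theorem, applied to ${\bf \Omega}=[\I-\delta\bx^{-1}\bet\,{\bf \Psi}^\D(x_i)]^{-1}[{\bf \Psi}^\D(x_i)+\delta\bx^{-1}\bet]$, and the exact version is legitimate because at $\delta=0$ we have ${\bf \Omega}={\bf \Psi}^\D(x_i)$ and $\delta\bx^{-1}\bet={\bf 0}$, so continuing in $\delta$ introduces no ambiguity, the term $S(\delta\bx^{-1}\bet)$ being read off diagonal-by-diagonal from \eqref{theteq} together with the node count $\sum_j s_j$ that guarantees no multiple of $\pi$ is skipped.

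The only thing left is $\sum_{j=1}^n(n_j-m_j)=0$ for every $x$ in the mesh interval, which is the content of the paragraph preceding the theorem and which I would cite verbatim: the sum vanishes at $x=x_i$; by Lemma \ref{lemma22} some $\phi_j$ crosses a multiple of $\pi$ exactly when some $\varphi_k$ does, and in equal numbers, so $\sum_j n_j$ and $\sum_j m_j$ change simultaneously and by equal amounts; by Lemma \ref{lemman} each $n_j$ can only increase; and the perturbation argument with the auxiliary symmetric matrices ${\bf \Upsilon}$, ${\bar{\bf \Upsilon}}(\cdot;\epsilon)$ and the companion ${\bar{\bf \Omega}}(\cdot;\epsilon)$ — using that $\partial{\bar{\bf \Upsilon}}/\partial\epsilon=\I+{\bf F}^2$ is positive definite and that $w^T(\partial{\bar{\bf \Omega}}/\partial\epsilon)w=w^Tw>0$ on the relevant kernel vector $w$ — shows that each $\varphi_k$ can likewise only increase through multiples of $\pi$, so $\sum_j m_j$ only increases. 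Hence the two running sums move in lockstep and their difference stays at its initial value $0$. Substituting $\sum_j(n_j-m_j)=0$ into the decomposition and eliminating $S({\bf \Omega})$ via \eqref{SOm} gives
\[
S({\bf \Psi}^\D(x))=S({\bf \Psi}^\D(x_i))+S(\delta\bx^{-1}\bet)+\sum_{j=1}^n(\rho_j-\tau_j),
\]
which is the assertion.

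I expect no genuine obstacle at the level of the theorem itself — it is a corollary of material already in hand. The one place where care is genuinely needed (and where I would concentrate it, were it not already supplied) is the sign of the winding of the $\varphi_k$: one must check that the monotone-in-$\epsilon$ family ${\bar{\bf \Omega}}(\cdot;\epsilon)$ pushes the crossing of $\varphi_k$ in the same direction as the manifestly monotone crossing of its symmetric companion ${\bar{\bf \Upsilon}}(\cdot;\epsilon)$, which is precisely what the identity $w^T(\partial{\bar{\bf \Omega}}/\partial\epsilon)w=w^Tw>0$ accomplishes, so that $m_k$ cannot decrease. Everything else is linear algebra and the arctangent addition formula.
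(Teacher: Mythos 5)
Your proposal is correct and follows essentially the same route as the paper: the theorem is exactly the assembly of the decomposition \eqref{eqP}, the arctangent-addition identity \eqref{SOm} with the diagonal-by-diagonal node count via \eqref{theteq} and \eqref{sledge1}, and the vanishing of $\sum_{j=1}^n(n_j-m_j)$ established through Lemma \ref{lemman}, Lemma \ref{lemma22} and the $\epsilon$-perturbation argument with ${\bf \Upsilon}$, ${\bar{\bf \Upsilon}}$ and ${\bar{\bf \Omega}}$. The paper likewise presents the theorem as the culmination of that preceding development rather than giving it a separate proof, so your bookkeeping matches the intended argument.
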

And since $S({\bf \Psi}(x))=S({\bf \Psi}^\D(x))$, this result allows us to count the number of multiples of $\pi$ in $S({\bf \Psi}(x))$ as we integrate across an interval.

%diagonaal geval apart hier als laatste opmerking, mss is dat veel simpeler en sneller (eigenwaarden hoeven niet bepaald te worden....?) is enkel als potential matrix ook diagonaal is al, en niet moet gediagonaliseerd worden! want anders door die D matrices wordt psi sowieso niet-diag ook al zijn de ICs diagonaal

\section{The automatic solution of coupled-channel Schr\"odinger systems}
We discuss here the procedures as they were implemented in an Matlab software package for the automatic solution of the Schr\"odinger equation. The package is available at \verb|http://www.nummath.ugent.be/SLsoftware|.
\subsection{Eigenvalue computations}
Algorithm \ref{algo1} shows the shooting procedure in which a CP method is used to propagate the left-hand and right-hand ${\bf \Psi}$ matrices. In our implementations we used a sixth order CP method for this propagation.  A Newton-Raphson iteration procedure can be used  to obtain a new and better trial value for the eigenvalue since
%\[E_{\rm new}=E-\frac{D(E)}{D'(E)}\]
 the CP algorithms allow a direct evaluation of the first derivative of $\U,\W$ and $\U',\W'$, and consequently of ${\bf \Psi}$, with respect to $E$, see \cite{Ixaru2,Ixaru}. %Consequently also the first derivative of ${\bf \Psi}$ with respect to $E$ and $D'(E)=\frac{d D(E)}{d E}$ can be evaluated.
\begin{algorithm}[t]
\caption{Shooting for the eigenvalue $E_k$}\label{algo1}
\begin{algorithmic}[1]
\STATE Input: a trial value $E$, a mesh $a=x_0<x_1<\dots<x_{N}=b$, with stepsizes $h_i=x_{i+1}-x_{i}$.
\STATE Choose a meshpoint $c=x_m, 0\leq m\leq N$ as the matching point.
\STATE Set up initial values for ${\bf \Psi}_L$ and ${\bf \Psi}_R$ satisfying the BCs at $a$ and $b$ resp.
\REPEAT 
\FOR{$i=0$ to $m-1$} 
\STATE Compute $\U(h_i),\W(h_i),\U'(h_i),\W'(h_i)$ by a CP method. %from section \ref{CA6}.
\STATE Propagate ${\bf \Psi}_L$ over the interval $[x_i,x_{i+1}]$:\\
\hspace*{3mm}${\bf \Psi}_L(x_{i+1})=[\W(h_i)+\U(h_i){\bf \Psi}_L(x_i)][\W'(h_i)+\U'(h_i){\bf \Psi}_L(x_{i})]^{-1}$
\ENDFOR
\FOR{$i=N$ down to $m+1$} 
\STATE Compute $\U(h_i),\W(h_i),\U'(h_i),\W'(h_i)$ by a CP method.
\STATE Propagate ${\bf \Psi}_R$ over the interval $[x_{i-1},x_{i}]$:\\
\hspace*{3mm}${\bf \Psi}_R(x_{i-1})=[-\W(h_i)+\W'(h_i){\bf \Psi}_R(x_i)][\U(h_i)-\U'(h_i){\bf \Psi}_R(x_{i})]^{-1}
$
\ENDFOR
\STATE Adjust $E$ to solve the equation $\det({\bf \Psi}_L(c)-{\bf \Psi}_R(c))=0$.
\UNTIL{$E$ sufficiently accurate (e.g.\ until the difference between two subsequent $E$ values is smaller than some user input tolerance)} 
\STATE Compute the multiplicity of $E$ as the number of zero eigenvalues in ${\bf \Psi}_L(c)-{\bf \Psi}_R(c)$.
\end{algorithmic}
\end{algorithm}
A good initial guess is needed as input for the Newton-Raphson procedure, this is where the Pr\"ufer procedure from section \ref{pruferm} comes in. We use the Pr\"ufer procedure to generate reasonably tight upper and lower bounds for the eigenvalue $E_k$ sought, i.e.\ a bracket $[{\hat E},{\bar E}]$ is determined such that ${\mathcal I}({\hat E})= k$ and ${\mathcal I}({\bar E})= k+1$ or ${\bar E}-{\hat E}<tol$. $E=({\hat E}+{\bar E})/2$ is then passed to Algorithm \ref{algo1} as the initial trial value for the eigenvalue $E_k$. From the Pr\"ufer based indexing function ${\mathcal I}({\hat E})$, we can also deduce the multiplicity of the eigenvalue. As indicated in Algorithm 1, this multiplicity can, however, also be obtained by examining the number of zero eigenvalues in the matching matrix.

%Another issue which needs to be clarified is the choice of the matching point.
 Our invariant imbedding approach allows matching anywhere in the integration domain. However a well-considered choice for the matching position can reduce the work required to locate an eigenvalue when it delivers us well-behaved forms of the mismatch function and consequently faster convergence of the Newton iteration. In our implementation, we chose as matching point a meshpoint in the region of the potential minimum \cite{Dunker},i.e.\ $c$ is the right endpoint of the mesh interval where $\V_0^\D$ reaches its smallest value. %close to the deepest bottom of the potential functions $V_{ij}$ $(1 \leq i \leq n,1 \leq j \leq i)$ by taking the right endpoint of the interval corresponding to the minimum value of all diagonal elements of the piecewise constant approximations of the potential matrix. 
Extra care (and a different matching point) may be needed when for certain $E$ values singularities appear close to the matching point and interfere with the Newton-Raphson process. This situation can be detected using a second mesh, which we call the reference mesh and which is also used for error control (see further). The finer reference mesh with stepsizes $h_1/4, h_1/4,(h_1+h_2)/4,(h_1+h_2)/4,(h_2+h_3)/4,(h_2+h_3)/4,\dots,h_N/4$ is constructed from the original mesh which has stepsizes $h_1,h_2,h_3,...,h_{N}$. In this way the reference mesh has (approximately) twice as many meshpoints and its meshpoints do in general not coincide with the ones from the original mesh.

\begin{remark}
The numerical Pr\"ufer procedure introduced in section \ref{section4} to compute ${\mathcal I}({ E})$ assumes that the matrix function ${\bf V}$ is piecewise constant. The numerical computation of ${\mathcal I}({ E})$ returns us thus the number of eigenvalues smaller than $E$ of the problem where the potential ${\bf V}$ has been replaced by the constant matrix ${\bf V}_0$ over each mesh interval. Note that this ``second order'' Pr\"ufer procedure is only used to obtain a first crude approximation for an eigenvalue and that for this purpose it is in many cases sufficient to apply the Pr\"ufer process over a ``coarse'' mesh, e.g.\ a mesh corresponding to the higher order CP method. Difficulties are only expected when one deals with a problem with close eigenvalues. By making use of the reference mesh, these difficulties can be detected. When the results obtained on the finer reference mesh do not agree (within some input tolerance $tol$) with the results on the original mesh, computations need to be repeated on a finer mesh.
%This is not necessarily equal to the number of eigenvalues smaller than $E$ which one would obtain for the approximating problem underlying a higher order CP method applied on the same mesh. However
\end{remark}

\subsection{Eigenfunctions}
We assume that the eigenvalue has already been found to a high level of accuracy, and we attempt now to find an associated eigenfunction. 
Suppose that we have available the left-hand solution matrices $\Y_L$ and $\Y'_L$ and the right-hand solution matrices $\Y_R$ and $\Y'_R$.
Any eigenfunction may be represented as
%\[y(x)=\begin{cases}\Y_L(x)w_L,\quad x\in[a,c]\\\Y_R(x)w_R,\quad x\in[c,b],\end{cases}\]
\begin{equation}y_k(x)=\begin{cases}\Y_L(x,E)w_L \quad (a\leq x <c)\\\Y_R(x,E)w_R \quad(c\leq x \leq b)\end{cases}\label{eigfu}\end{equation}
where $w_L$ and $w_R$ are appropriate non-zero vectors. In fact, 
%Suppose that we have available the left-hand solution matrices ${\bf Y}_L$ and $\Y_L'$ and the right-hand solution matrices $\Y_R$ and $\Y_R'$ which satisfy in the matching point:
%\begin{align}\label{alc}
%\Y_L(c,E)w_L=\Y_R(c,E)w_R\\
%\nonumber \Y'_L(c,E)w_L=\Y'_R(c,E)w_R
%\end{align}
%where $w_L$ and $w_R$ are non-zero vectors. The existence of such vectors is guaranteed when $E$ is an eigenvalue of the problem. 
$w_L$ is any non-trivial vector such that
\[({\Y'_R}^T\Y_L-\Y_R^T\Y'_L)(c,E)w_L={\bf 0},\]
and $w_R$ has to satisfy 
\[w_R^T({\Y'_R}^T\Y_L-\Y_R^T\Y'_L)(c,E)={\bf 0}^T.\]
For a simple eigenvalue, there is just one $w_L$ (up to scaling), and a corresponding $w_R$. 
If the eigenvalue in question has, however, multiplicity $m \leq n$ then one can find $m$ linearly independent vectors which can fulfill the role of $w_L$, and correspondingly, $m$ different $w_R$. Moreover any linear combination of valid $w_L$'s yields another valid $w_L$, and similar for $w_R$. We have to be careful to match the left-hand half of an eigenfunction with the right hand half of the same eigenfunction and to obtain the correct normalisation.  We used the procedure described in \cite{Marletta} for this purpose. 
%Suppose that a set of valid linearly independent $w_L$'s has been found, which we label as $w_L^{j}, j=1,\dots,m$. Each $w_L^j$ represents the left hand part of an eigenfunction, and we must find their corresponding right hand partners from a set of $w_R$'s, denoted as $w_R^j, j=1,\dots,m$. The correct $w_R$ corresponding to $w_L^i$ will be of the form
%\begin{equation}\sum_{j=1}^m a_{i,j}w_R^j\label{av}\end{equation}
%where the $a_{i,j}$ are scalars which can be determined by substituting $w_L^i$ and \eqref{av} in \eqref{alc} and solving the resulting $2n\times m$ system which may be written in the form
%\[\Z_L=\Z_R{\bf A}^T\]
%where the $i$th column of $\Z_L$ contains $\Y_L(x)w_L^i$ on the first $n$ rows, and $\Y'_L(x)w_L^i$ on the next $n$ rows. $\Z_R$ is a similar matrix for the right hand side. The entries of the matrix ${\bf A}$ are $a_{i,j}$. Once ${\bf A}$ has been computed, we can re-define the different $w_R^j$ by setting
%\[w_R^j \to w_R^j{\bf A}^T,\]
%which ensures that $w_R^j$ gives the right hand half of the eigenfunction whose left hand half is given by $w_L^j$ for each $j=1,\dots,m$.
%Given appropriate vectors $w_L$ and $w_R$, the associated eigenfunction $y_k$ can then be obtained from the formula
%\begin{equation}y_k(x)=\begin{cases}\Y_L(x,E)w_L \quad (a\leq x <c)\\\Y_R(x,E)w_R \quad(c\leq x \leq b)\end{cases}\label{eigfu}\end{equation}
This procedure assumes, however, the knowledge of the matrices $\Y_L$ and $\Y_R$. In our approach, not the matrices $\Y$ and $\Y'$ are propagated but ${\bf \Psi}$. The invariant imbedding variable ${\bf \Psi}$ is not sufficient to recover the eigenfunction values. We also need to have $\Y'$ in order to obtain the wavefunction matrix as $\Y={\bf \Psi} \Y'$. The matrix $\Y'$ is propagated by the following relation:
\[\Y'(x_{i+1})=(\U'(h_i){\bf \Psi}(x_i)+\W'(h_i))\Y'(x_i).\]
The quantity $\U'(h_i){\bf \Psi}(x_i)+\W'(h_i)$ is an $n\times n$ matrix which is already calculated in propagating ${\bf \Psi}$.
Note that $\Y'$ needs to be propagated only when computing an eigenfunction associated to an eigenvalue, i.e.\ only for the last energy value resulting from the shooting iteration process.

The eigenfunction \eqref{eigfu} can be multiplied by a scalar to achieve the appropriate eigenfunction normalization. We have 
\[\int_a^by^T_k(x)y_k(x)dx=w_L^T\left(\int_a^c\Y_L^T(x)\Y_L(x)dx\right) w_L+w_R^T\left(\int_c^b\Y_R^T(x)\Y_R(x)dx\right) w_R\]
which means that we need to compute the two integrals in the right hand side.
%The derivative of eq.\ \eqref{eq1} with respect to $E$ reads
%\begin{equation}\label{ylam}{\Y^{(E)}}''=(\V(x)-E \I)\Y^{(E)}-\Y\end{equation} 
%and the transpose of eq.\ \eqref{eq1}
%\begin{equation}{\Y''}^T=\Y^T(\V(x)-E \I).\label{ylbm}\end{equation} 
%On multiplying \eqref{ylbm} on the right by $\Y^{(E)}$ and \eqref{ylam} on the left by $\Y^T$ and then subtracting we get
%\[{\Y''}^T\Y^{(E)}-\Y^T{\Y^{(E)}}''=\Y^T\Y.\]
%Integrating over the step $[a,c]$,  we get
It can be shown that (see \cite{Ixaru3} for the scalar case)
 \[\int_a^c\Y_L^T(x)\Y_L(x)dx=\Y_L'^T(c){\bf \Psi}_L^{(E)}(c)\Y_L'^T(c)\]
% since  ${\Y_L}^{(E)}(a)={\Y'_L}^{(E)}(a)={\bf 0}$
and 
 \[\int_c^b\Y_R^T(x)\Y_R(x)dx=-\Y_R'^T(c){\bf \Psi}_R^{(E)}(c)\Y_R'^T(c).\]
 Thus the knowledge of the first derivative of ${\bf \Psi}$ with respect to $E$, allows the computation of the eigenfunction norm. As mentioned earlier,  this derivative can be simultaneously propagated by the CP algorithm.
%
%scalair.
%Normalization of the eigenfunctions: This requires an accurate compuation of the integral of $y^2$ on each step. The derivative with respect to $E$ of prob1 reads
%\[y_E''=(V(x)-E)y_E-y\]
%On multiplying this by y and prob1 by $y_E$ and then subtracting and integrating over the step [x,x+h] we
%get
%\[\int_x^{x+h}y^2(x)dx=[y'(x)y_E(x)-y(x)y'_E(x)]^{x+h}_x\]
\subsection{Stepsize selection and error control}
A very important property of the CP methods is that their errors are bounded with respect to the energy $E$. An implication is that the mesh needs to be generated only once (before the actual shooting process) and can be used to generate multiple eigenvalue approximations.
An automatic stepsize selection for CP methods applied on systems of Schr\"odinger equations has been presented in \cite{Ledoux2}. It constructs a mesh with non-equal steps whose lengths are consistent with a preset tolerance $tol$ by controlling the local error in the propagation matrix. A similar procedure is used here where we use all the terms in the formulae for the sixth order algorithm which are supplementary to the terms to be used in a (weaker) fourth order method, to construct the local error estimate. %This error estimate is then used in an iterative process which determines a stepsize which corresponds to the value of the tolerance $tol$.
%with
%\begin{align}
%  \nonumber \Delta \U=&[{\bar \V}_1,{\bar \V}_0]\eta_1/24+ \C_2^{(U)}\eta_2\\
%  \nonumber \Delta \U'=&\left(60{\bar \V}_2 -[{\bar \V}_1,{\bar \V}_2]\right)\eta_0/120+\left(-60{\bar \V}_2-{\bar \V}_2^2-5[{\bar \V}_2,{\bar \V}_0]\right)\eta_1/40+\C_2^{(U')}\eta_2\\
%  \nonumber \Delta \W=&\C_2^{(W)}\eta_2
%\\\nonumber
%\Delta W'=&[{\bar \V}_1,{\bar \V}_0]\eta_1/24+ \C_2^{(W')}\eta_2
%\end{align}

The matlab package also returns an estimation of the error in each eigenvalue approximation. These error estimates are obtained by calculating for each eigenvalue an associated 'reference'
eigenvalue on the reference mesh. The error estimate is then the difference between the eigenvalue and the more accurate reference
eigenvalue. 

\section{Some numerical results}
As a first testproblem for the automatic eigenvalue computation and the stepsize selection procedure, we consider the problem from \cite{IxaruE} which has known eigenvalues and solutions. The $2\times 2$ matrix potential for $x\geq 0$, is given by
\begin{align}
&V_{1,1}(x)=V_{2,2}(x)=V_{\rm PT}(x;45,1)+V_{\rm PT}(x;39/2,1/2)\label{Ix1}\\
&V_{1,2}(x)=V_{2,1}(x)=V_{\rm PT}(x;45,1)-V_{\rm PT}(x;39/2,1/2)\label{Ix2}
\end{align}
where $V_{\rm PT}$ is the P\"oschl-Teller potential
\[V_{\rm PT}(x;\nu,\alpha)=-\nu/\cosh^2(\alpha x).\] The wavefunction tends to zero when $x\to+\infty$. In our experiments the infinite integration interval $x\geq 0$ has been cut at $x=30$.
\begin{table}[!h]
	\begin{center}
	\caption{Some results obtained for the first testproblem when applying the procedure with two different input tolerances $tol$. The real absolute error and the estimated error are shown. $E_k$ is the exact eigenvalue and $nsteps$ is the number of steps in the mesh generated by the stepsize selection algorithm.}
	\label{tab:problem0}
		\begin{tabular}{llllll}
		\hline
			$k$&$E_k$&\multicolumn{2}{c}{$tol=10^{-6}$}&\multicolumn{2}{c}{$tol=10^{-9}$}\\
			&&Err&ErrEst&Err&ErrEst\\
			\hline
			0&$-64$&2e-7&2e-7&3e-10&3e-10\\
			1&$-36$&1e-6&1e-6&8e-10&8e-10\\
			2&$-30.25$&1e-8&1e-8&5e-12&5e-12\\
			3&$-20.25$&2e-8&2e-8&2e-11&2e-11\\
  		4&$-16$&2e-6&2e-6&1e-9&1e-9\\
  		5&$-12.25$&5e-8&5e-8&2e-11&2e-11\\
  		6&$-6.25$&9e-8&9e-8&3e-11&3e-11\\
  		7&$-4$&1e-6&1e-6&7e-10&7e-10\\
  		8&$-2.25$&1e-7&1e-7&4e-11&4e-11\\
  		9&$-0.25$&1e-7&1e-7&4e-11&4e-11\\
  		\hline
  		nsteps&&68&&263&\\
  		\hline
		\end{tabular}
	\end{center}
\end{table}
Table \ref{tab:problem0} shows the results obtained by the Matlab package for two different user input tolerances $tol$. One can observe that the generated eigenvalue approximations have accuracies which are in agreement with $tol$ and that the error estimate is adequate. 

As a second testproblem, we consider the following system of coupled differential equations which originates from applying separation of variables to the Schr\"odinger equation in ${\mathbb R}^2$ \cite{Marletta}
\begin{equation}\frac{d^2 y_i}{dx^2}=\sum_{j}[V_{ij}(x)-E\delta_{ij}]y_j(x).\label{testprob2}\end{equation}
When we choose the set of orthonormal functions 
\[\Phi_j(\theta)=\begin{cases}(2\pi)^{-1/2}, &j=0\\
\pi^{-1/2}\sin((j+1)\theta/2), &j\; {\rm odd}\\
\pi^{-1/2}\cos(j\theta/2), &j\; {\rm even},
\end{cases}\]
and the potential ${\hat V}(x,\theta)$ of the ${\mathbb R}^2$ problem of the form 
\[{\hat V}(x,\theta)={\bar V}(x)\pi^{-1/2}\left(\frac{1}{\sqrt{2}}+\frac{2\cos\theta-1}{(2\cos\theta-1)^2+\sin^2\theta}\right)={\bar V}(x)\sum_{k=0}^\infty\frac{1}{2^k}\cos(k\theta),\]
the matrix $\V$ in \eqref{testprob2} is then given by
\[V_{ij}(x)={\bar V}(x) Q_{ij}\]
with
\[ Q_{i+1,j+1}=\sum_{k=0}^\infty\frac{1}{2^k} \int_0^{2\pi} \Phi_i(\theta)\Phi_j(\theta) \cos(k\theta)d\theta.\]
The entries of this matrix ${\bf Q}$ can be evaluated explicitly. The vector $y$ is truncated to have $n$ elements and for the function ${\bar V}(x)$ we choose
a Woods-Saxon potential
\[{\bar V}(x)=-50\left(\frac{1-\frac{5t}{3(1+t)}}{1+t}\right),\quad t=e^{(x-7)/0.6}.\]
The boundary conditions imposed on the wave functions $y_j$ are
\[y_j(0)=0,\quad y_j(x_{\rm max})=0,\quad 0\leq x\leq x_{\rm max}=15.\]
%\begin{figure}
%	\centering
%		\includegraphics[width=0.45\textwidth]{./WS1.eps}
%		\includegraphics[width=0.45\textwidth]{./WS2.eps}
%	\caption{Entries of the potential matrix of the second testproblem for $n=4$.}	\label{fig:WS}
%\end{figure}
\begin{figure}
	\centering
			\includegraphics[width=0.45\textwidth]{./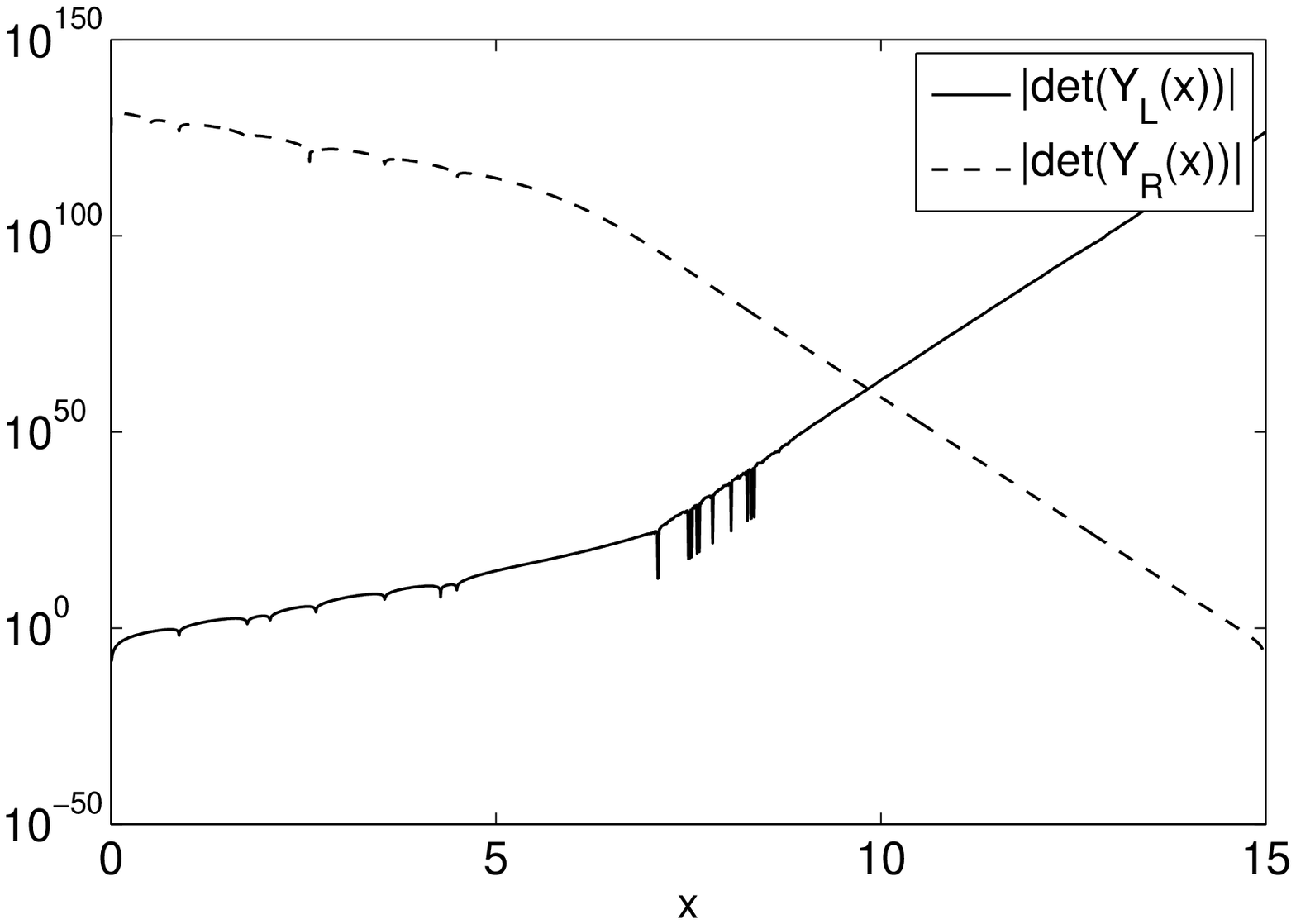}
			\includegraphics[width=0.45\textwidth]{./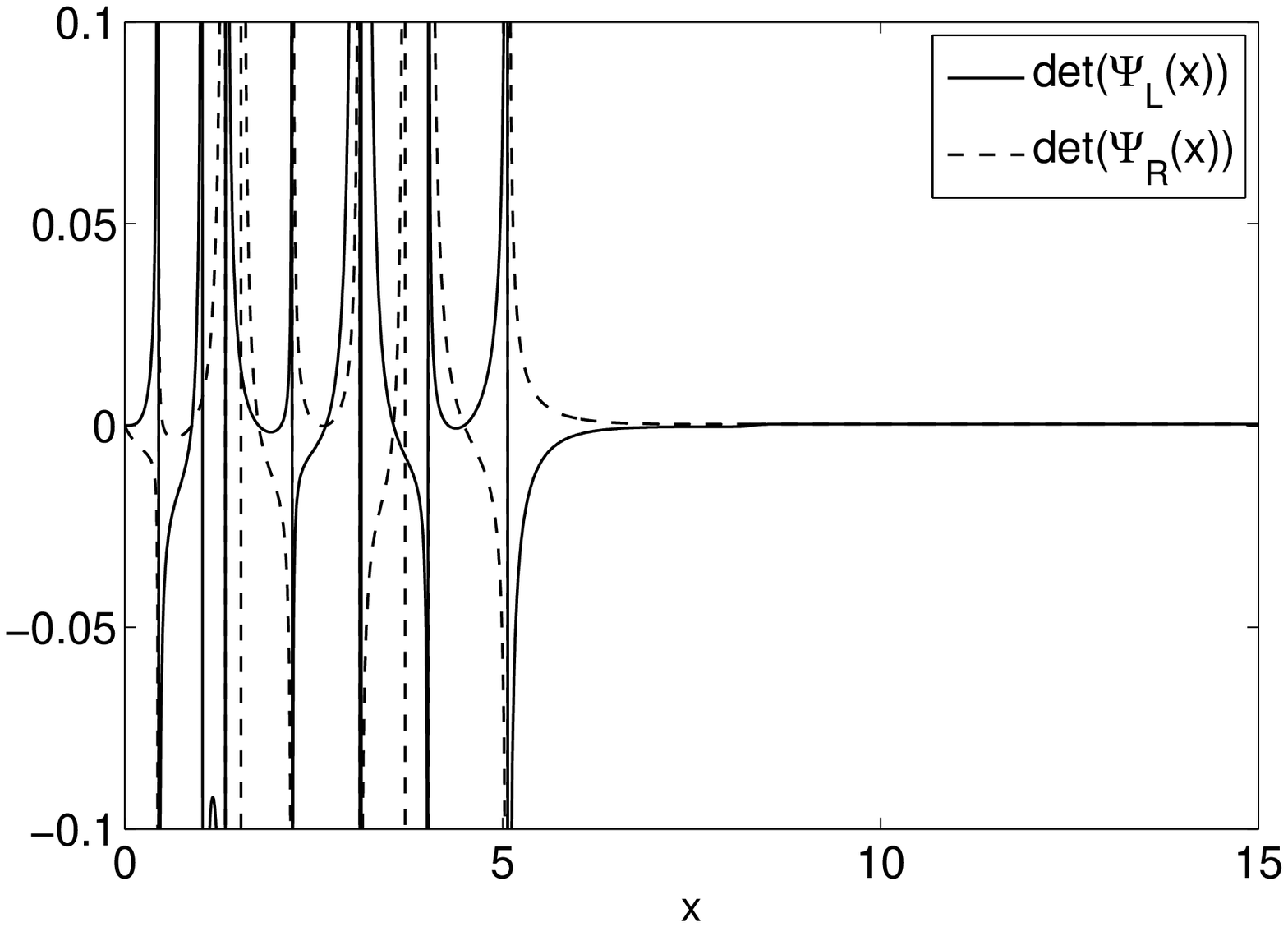}
	\caption{Illustration of the unstable propagation of $\Y$ and the stable propagation of ${\bf \Psi}$ for the second testproblem with $n=4$ and $E=E_8$.}	\label{fig:WS2}
\end{figure}
For some $E$-values in the spectrum some channels will be closed and others will be open over part of the domain.
The closed channels cause numerical instabilities when using a wavefunction propagation algorithm. This is illustrated in 
Figure \ref{fig:WS2} which shows the determinant of the matrix functions $\Y_L$, $\Y_R$, ${\bf \Psi}_L$ and ${\bf \Psi}_R$ for $n=4$ and $E$ equal to the eigenvalue $E_8=-53.43922418$. A CP scheme was used to propagate $\Y_L$ and ${\bf \Psi}_L$ outwards from $x=0$ to $x=15$ with initial conditions $\Y_L(0)={\bf 0},\Y'_L(0)={\bf I}$ and $\Y_R$ and ${\bf \Psi}_R$ inwards from $x=15$ to $x=0$ with initial conditions $\Y_R(15)={\bf 0},\Y'_R(15)={\bf I}$. Where $\det({\bf \Psi}_L)$ and $\det({\bf \Psi}_R)$ vanish in both endpoints and consequently match in both directions onto the boundary conditions, this is clearly not the case for $\det({\bf Y}_L)$ and $\det({\bf Y}_R)$.

We calculated the eigenvalues $E_0$ to $E_{25}$ for $n=4$ and $n=8$ at a tolerance of $10^{-6}$ using the procedure from section 5. The results are shown in Table \ref{tab:problem2}. Only the correct digits are shown, i.e.\ the ones which agree with the digits obtained with a $tol=10^{-12}$ computation.
Some values obtained with eigenvalue calculations based on finite difference discretization are also provided. The finite difference method with $nsteps$ mesh intervals results in a $n (nsteps-1)\times n (nsteps-1)$ matrix eigenvalue problem. One Richardson extrapolation step was used. The finite difference computations with one extrapolation step then results in $nsteps+nsteps/2$ evaluations of the potential matrix. The sixth order coefficient approximation method uses 3 potential matrix evaluations per mesh interval.
\begin{table}[!h]
	\begin{center}
	\caption{Eigenvalue computations for the second testproblem, obtained with the CP procedure with input tolerance $tol=10^{-6}$ and finite differences (FD). }
	\label{tab:problem2}
		\begin{tabular}{lllll}
		\hline
			$k$&\multicolumn{2}{c}{$n=4$}&\multicolumn{2}{c}{$n=8$}\\
			&CP&FD&CP&FD\\
			\hline
			0&-65.42657004&-65.427&-82.43582467&-82.44\\
			1&-64.03484348&-64.03&-80.97081456&-80.97\\
			2&-62.0689567&-62.07&-78.90949840&-78.9\\
			3&-59.61523778&-59.62&-76.3408597&-76.3\\
  		4&-56.7257918&-56.73&-73.3178863&-73.3\\
  		5&-55.1994967&-55.20&-71.98401865&-71.98\\
  		10&-49.5863494&-49.59&-66.0559592&-66.1\\
  		25&-32.0936608&-32.1&-43.9683184&-44.0\\
  		\hline
  		%FDpaper4.m
  		nsteps&72&4000&76&2000\\ %is niet aantal fevs bij cp! bij FD gewoon in tekst vermelden dat 1 Richardson extrapolatie en hoeveel meshpoints
  		\hline
		\end{tabular}
	\end{center}
\end{table}

The results generated by the implemented code show the power of our generalized Pr\"ufer representation, and (once again) the practical usefulness of CP methods. 

%find the eigenvalue index (and in fact can aim for an eigenvalue with given index)

\section{Conclusion}
We discussed a shooting algorithm which approximates the eigenvalues and eigenfunctions of the coupled-channel Schr\"odinger equation. The method is based on the approximation of the potential matrix function and the quantity that is propagated is the $R$-matrix. The exponential build up of the wavefunction in the classically forbidden regions is therefore cancelled, and the method is inherently stable. Among the novel results presented in the paper, we can mention the observation that CP-based propagation algorithms can accurately pass through singularities in the flow associated to the $R$-matrix and that changing coordinates is consequently not required. We also introduced new algorithms to derive the node count from the knowledge of the $R$-matrix. This node count is immensely valuable when trying to find all the eigenvalues of the coupled equations that lie in a particular energy range or with a specific index. Supplemented with a stepsize selection algorithm and error control, the proposed algorithms were shown to be very well suited for the implementation in automatic software.% A robust CP-based software package for the automatic computation of the eigenvalues and eigenfunctions of coupled channel equations was presented.


\begin{thebibliography}{biblio}
\bibitem{Alexander} M.H. Alexander, D.E. Manolopoulos, % A stable linear reference potential algorithm for solution of the quantum close-coupled equations in molecular scattering theory, 
J. Chem. Phys. 86 (1987) 2044.
\bibitem{Atkinson} F.V. Atkinson, Discrete and Continuous Boundary Problems, Academic
Press, 1964.
\bibitem{Bailey}P.B. Bailey, P.B. Everitt, A. Zettl, %The SLEIGN2 Sturm-Liouville code. 
ACM Trans. Math. Software 21 (2001) 143.
%\bibitem{Bandy} B. Bandyrskii, I. Gavrilyuk, I. Lazurchak, V. Makarov, Comput. Methods in Appl. Math. 5 (2005) 362.
\bibitem{Canosa}J. Canosa, R. G. De Oliveira, % A new method for the solution of the Schr\"odinger equation. 
J. Comput. Phys., 5 (1970) 188.
\bibitem{Chou} C.C. Chou, R.E. Wyatt, %Riccati differential equation for quantum mechanical bound states: comparison of numerical integrators, 
Int. J. Quantum Chem. (2008) 238.
\bibitem{Dunker}A.M. Dunker, R.G. Gordon,% Solution for bound state wavefunctions and matrix elements by the piecewise analytic method 
J. Chem. Phys. 64 (1976) 4984.
\bibitem{Ershov} S.N. Ershov, J.S. Vaagen, M.V. Zhukov, %Modified variable phase method for the solution of coupled radial Schr\"odinger equations, 
Physical Review C 84 (2011) 064308.
\bibitem{Friedman} R.S. Friedman, M.J. Jamieson, S.C. Preston, %On the numerical solution of coupled eigenvalue differential equations arising in molecular spectroscopy, 
Comput. Phys. Commun. 58 (1990) 17.
\bibitem{Gordon} R.G. Gordon, J. Chem. Phys. 51 (1969) 14.
%\bibitem{Greenberg} L. Greenberg, A Pr\"ufer method for calculating eigenvalues of self-adjoint systems of ordinary differential equations, Parts 1 and 2, Technical report,
%Department of Mathematics, University of Maryland, 1991.
\bibitem{sleuth}L. Greenberg, M. Marletta, %Algorithm 775: The code SLEUTH for solving fourth-order Sturm-Liouville problems, 
ACM T. Math. Software 23 (1997) 453.
\bibitem{Hutson}{ J.M. Hutson}, %{Coupled channel methods for solving the bound-state Schr\"odinger equation}, 
Comput. Phys. Commun. 84 (1994) 1.
\bibitem{Iserles} A. Iserles, %On the method of Neumann series for highly oscillatory equations, 
BIT 44 (2004) 473.
\bibitem{Ixarubook} L.Gr. Ixaru, Numerical Methods for Differential Equations and Applications
(Reidel, 1984).
\bibitem{Ixaru} L.Gr. Ixaru, %LILIX -- a package for the solution of the coupled channel Schr\"odinger equation, 
Comput. Phys. Commun. 174 (2002) 834.
\bibitem{Ixaru3} L.GR. Ixaru, H. De Meyer, G. Vanden Berghe, J. Comput. Appl. Math. 88 (1997) 289.
\bibitem{Ixaru2} L.Gr. Ixaru, H. De Meyer, G. Vanden Berghe, Comput. Phys. Commun. 118 (1999) 259.
%\bibitem{IxaruTalk} L.Gr. Ixaru, Special techniques related to the CP methods for the coupled channel Schr\"odinger
%equation, talk given at Workshop Numerical Approach of Oscillatory functions 2008, Ghent.
\bibitem{IxaruE} L.Gr. Ixaru, Phys. Rev. A 77 (2008) 064102.
\bibitem{Johnson2} B.R. Johnson, J. Comput. Phys. 13 (1973) 445.
\bibitem{Johnson} B.R. Johnson, J. Chem. Phys. 69 (1978) 4678.
%\bibitem{Matslise}V. Ledoux, M. Van Daele, G. Vanden Berghe, ACM Trans. Math. Software 31 (2005) 532.
\bibitem{Ledoux2} V. Ledoux, M. Van Daele, G. Vanden Berghe, %CPM\{P, N\} methods extended for the solution of coupled channel Schr\"odinger equations, 
Comput. Phys. Commun. 174 (2006) 357.
\bibitem{Ledoux} V. Ledoux, M. Van Daele, G. Vanden Berghe, %A numerical procedure to solve the multichannel Schr\"odinger eigenvalue problem, 
Comput. Phys. Comm. 176 (2007) 191.
\bibitem{Ledoux3} V. Ledoux, M. Rizea, M. Van Daele, G. Vanden Berghe, I. Silisteanu, %Eigenvalue problem for a coupled channel Schrödinger equation with application to the description of deformed nuclear systems, 
J. Comput. Appl. Math. 228 (2009) 197.
%\bibitem{Ledoux4} V. Ledoux, M. Van Daele, G. Vanden Berghe, %Efficient computation of high index Sturm-Liouville eigenvalues for problems in physics, 
%Comp. Phys. Commun. 180 (2009).
\bibitem{Light} J.C. Light,  R.B. Walker, J. Chem Phys. (1976) 4272.
\bibitem{mano}D.E. Manolopoulos, S. K. Gray, %Symplectic integrators for the multichannel Schr\"odinger equation ,
J. Chem. Phys. 102 (1995) 9214.
\bibitem{mano2} D. E. Manolopoulos,% An improved log derivative method for inelastic scattering, 
J. Chem. Phys. 85 (1986) 6425. 
\bibitem{Marletta} M. Marletta, %Automatic solution of regular and singular vector Sturm-Liouville problems, 
Numer. Algorithms 4 (1993) 65.
\bibitem{Marlettaphd}M. Marletta, Theory and Implementation of Algorithms for Sturm-Liouville Computations. PhD thesis, Royal Military College of Science, Shrivenham, UK,
1991.
\bibitem{Marletta2}M. Marletta,  J.D. Pryce, %Automatic solution of Sturm-Liouville problems using the Pruess method, 
J. Comput. Appl. Math. 39 (1992) 57.
\bibitem{Mrugala} F. Mrugala, D. Secrest, J. Chem. Phys. 78 (1983) 5954.
\bibitem{Nelson}P. Nelson, A.K. Ray, G.M. Wing, %Effectiveness of inverse Riccati transformations in matrix case, 
J. Math. Anal. Appl. 65 (1978) 201.
\bibitem{Pruess} S. Pruess, C.T. Fulton, ACM Trans. Math. Software 19 (1993) 360.
\bibitem{Pryce} J.D. Pryce, Numerical Solution of Sturm–Liouville Problems (Clarendon
Press, 1993).
\bibitem{Schiff}{J. Schiff, S. Shnider}, SIAM J. Numer. Anal., 36 (1999), 1392.
%\bibitem{Scott} M.R. Scott, L.F. Shampine and G.M. Wing, %Invariant imbedding and the calculation of eigenvalues of Sturm-Liouville systems. 
%Computing 4 (1969), 10–23.
\bibitem{Scottbook}M. R. Scott, Invariant Imbedding and its Applications to Ordinary Differential
Equations-An Introduction (Addison-Wesley, Reading, Mass., 1973).
\bibitem{Thompson} I.J. Thompson, Comput. Phys. Rep. 7 (1988) 167.
\bibitem{Tolsma}L.D. Tolsma, G.W. Veltkamp, Comput. Phys. Commun 40 (1986) 233.
\end{thebibliography}
\end{document}